\newtheorem{theorem}{Theorem}
\newtheorem{definition}[theorem]{Definition}
\newtheorem{lemma}[theorem]{Lemma}
\newtheorem{proposition}[theorem]{Proposition}
\newtheorem{corollary}[theorem]{Corollary}
\newtheorem{example}[theorem]{Example}
\newtheorem{remark}[theorem]{Remark}
\newcommand{\E}{\mathbb{E}}
\newcommand{\R}{\mathbb{R}}
\newcommand{\e}{\text{e}}
\title{Cointegration in continuous time for factor models}
\author{Fred Espen Benth and Andre S\"uss}
\address{Fred Espen Benth \\
Department of Mathematics \\
University of Oslo\\
P.O. Box 1053, Blindern\\
N--0316 Oslo, Norway}
\email[]{fredb\@@math.uio.no}
\address{Andre S\"uss\\
Centre for Advanced Study \\
Drammensveien 78, N-0271 Oslo, Norway}
\email[]{suess.andre\@@web.de}
\keywords{cointegration, infinite dimensional stochastic processes, polynomial processes, forward prices, commodity markets}
\date{\today}
\thanks{F. E. Benth acknowledges financial support from the project FINEWSTOCH funded by the Norwegian Research Council. A. S\"uss acknowledges partial financial support by the grant MTM 2015-65092-P, Secretaria de estado de investigacion, desarrollo e innovacion, Ministerio de Economia y Competitividad, Spain.}
\begin{document}

\begin{abstract}
We develop cointegration for multivariate continuous-time stochastic processes, both in finite and infinite dimension. Our 
definition and analysis are based on factor processes and operators mapping to the space of prices and cointegration. 
The focus is on commodity markets, where both spot and forward prices are analysed in the context of
cointegration. We provide many examples which include the most used continuous-time pricing models, including
forward curve models in the Heath-Jarrow-Morton paradigm in Hilbert space.    
\end{abstract}

\maketitle

\section{Introduction}
\label{sec:intro}

We aim at developing a formalism to the concept of {\it cointegration} in continuous time. Cointegration has since the seminal paper 
of Engle and Granger~\cite{EG} become a very popular concept for stochastic modelling of dependent time series of data, in particular in
economics. For example, the price series of two financial assets can be non-stationary, while one may find that a linear combination of these is stationary. Cointegration provides a framework
for analysing and modelling time series that explains such observable features in data. 

Although there has been a huge development in continuous-time financial models over the last decades, the literature on
cointegration for continuous time stochastic processes and its application to finance is relatively scarce. A non-exhaustive list of
papers in this stream of research include Comte~\cite{comte}, Duan and Pliska~\cite{DP}, Duan and Theriault~\cite{DT}, Nakajima and Ohashi~\cite{NO}, Paschke and Prokopczuk~\cite{PP},
Benth and Koekebakker~\cite{BK}, and recently Farkas et al.~\cite{Farkas}. In the present paper we formalise ideas 
on cointegration in continuous time for factor processes, and extend these to cointegration for stochastic processes with infinite
dimensional state space. The latter will provide a theoretical framework for studying cointegration in forward and futures markets,
say.

Comte~\cite{comte} presents an in-depth analysis on classical cointegration and its extension to continuous-time 
models, where continuous-time autoregressive moving average processes (CARMA) play a central role. Duan and Pliska~\cite{DP} 
analyse a specific cointegrated asset price model, and show that pricing options will not be influenced by cointegration. Their paper
has triggered many theoretical and empirical studies, including Nakajima and Ohashi~\cite{NO}, Paschke and Prokopczuk~\cite{PP},
Benth and Koekebakker~\cite{BK} and Farkas et al.~\cite{Farkas}. Duan and Theriault~\cite{DT} extend cointegration to
continuous-time forward price models. Benth and Koekebakker~\cite{BK}, and more recently Benth~\cite{benth-eberleinfest} 
focus on the relationship between cointegration in spot and forward markets, and propose cointegration models for forward markets. 
Contrary to the conclusions of Duan and Pliska~\cite{DP}, these two papers argue that in commodity markets the pricing measure 
may preserve cointegration. We refer to Back and Prokopczuk~\cite{BP} for a review of modelling and pricing in commodity markets.

Starting with spot price models, we discuss a framework for cointegration based on factor models. Our concept makes use of a set of stochastic processes, which we call factors, which explains the dynamics of prices via a linear transformation. This yields a vector-valued
price dynamics, for which one can introduce the concept of cointegration. The following example is frequently referred to in the text, and explains our ideas in a simple setting. 
\begin{example}
\label{motivating-ex}
Consider the classical spot price model for two 
commodity markets, given 
by the two-factor model;
\begin{equation}
\label{def:canonical}
S_i(t)=X_i(t)+X_3(t), i=1,2.
\end{equation} 
We assume $X_3(t)=\mu t+\sigma B_3(t)$ being a drifted Brownian motion and $X_i(t)$ being two Ornstein-Uhlenbeck processes,
$$
dX_i(t)=-\alpha_i Y_i(t)\,,dt+\eta_i\,dB_i(t),
$$ 
with constants $\alpha_i>0, \eta_i>0, i=1,2$. Here, $(B_1,B_2,B_3)$ is a trivariate
Brownian motion, possibly correlated.
This model was proposed in the univariate case by Lucia and Schwartz~\cite{LS} for
electricity spot prices and extended to cross-commodity markets by Paschke and Prokopczuk~\cite{PP-coint} for oil markets
(see also Duan and Pliska~\cite{DP} and Benth and Koekebakker~\cite{BK} for general analysis). For example, 
$(S_1,S_2)$ can model the joint spot price dynamics in the coal and electricity market, or in two different 
electricity markets. Since the bivariate Ornstein-Uhlenbeck process 
$(X_1,X_2)$ admits a limiting Gaussian distribution, the price difference process $S_1(t)-S_2(t)$ will have a limiting distribution.
On the other hand, each marginal price process $S_i$ is non-stationary since the drifted Brownian motion $X_3$ is (unless $\mu=\sigma=0$). According
to Duan and Pliska \cite{DP}, the processes $S_1$ and $S_2$ are {\it cointegrated}. We notice that the definition of the bivariate price
process $(S_1,S_2)$ involves three factor processes  $X_1$, $X_2$ and $X_3$, and a linear combination of these. Introducing the matrix
\begin{equation}
\label{eq:P-LS}
\mathcal P=\left[\begin{array}{ccc} 1 & 0 & 1 \\ 0 & 1 & 1 \end{array}\right],
\end{equation}
we represent the vector $\mathbf S:=(S_1,S_2)^{\top}$ as $\mathbf S(t)=\mathcal P (X_1(t),X_2(t),X_3(t))^{\top}$. 
We assume that all
elements $\mathbf x\in\R^n$, $n\in\mathbb N$, are coloumn vectors, and $\mathbf x^{\top}$ is the transpose of 
$\mathbf x$. Cointegration
is achieved since there exists a vector $\mathbf c=(1,-1)^{\top}\in\R^2$ such that the process 
$\mathbf c^{\top}\mathbf S = X_1 - X_2$ admits a limiting distribution.  
\end{example}

Based on multivariate spot price models of the form introduced in this example, we analyse forward prices derived from processes
with certain affinity properties. In this context, polynomial processes (see Filipovic and Larsson~\cite{FL}) constitute an important case, 
along with the more specific CARMA processes. We present results on the cointegration relationship between spot and forward markets, 
with a particular attention to pricing measures and the application to commodity markets. 

Our analysis of spot and forward markets motivates the definition of cointegration for stochastic processes in infinite dimensions. 
We introduce a concept for modelling cointegrated forward curves, following the HJM-paradigm 
(see Heath, Jarrow and Morton~\cite{HJM}) of modelling forward prices directly rather than explaining these via spot models
(we refer to Benth, \v{S}altyt\.e Benth and Koekebakker~\cite{BSBK} for an extensive analysis of forward modelling in energy markets). 
Since forward curves can be modelled as stochastic processes in Hilbert space of real-valued functions on $\R_+$ 
(see Benth and Kr\"uhner \cite{BK-CIMS,BK-sifin}),
we concentrate our analysis on formulating cointegration via linear operators on Hilbert spaces. We show how cointegration in Hilbert space can be related to the finite dimensional case. It turns out that product Hilbert spaces provide a natural framework for
modelling, and we give several examples including infinite dimensional factor processes capturing stationary and non-stationary effects as well as non-Gaussianity.  We also include a discussion of some 
recent empirical studies on forward gas markets by Geman and Liu~\cite{GemanLiu} viewed in our cointegration context. 

The results of this paper is presented as follows: in Section 2 we define and analyse cointegration for multivariate spot price 
models based on factor processes. The question of forward pricing in cointegrated spot markets is analysed in Section 3, where we
give a description of cointegration of forwards. Finally, in Section 4 we introduce cointegration for Hilbert-space valued stochastic processes, and apply this to cross-commodity forward prices modelled within the HJM-approach.


\section{Cointegration for factor models}

Suppose that $(\Omega,\mathcal F,\mathbb P)$ is a complete probability space equipped with
a right-continuous filtration $\{\mathcal F_t\}_{t\geq 0}$ where $\mathcal F_t$ contains all sets of $\mathcal F$ of probability zero
(i.e., satisfying the {\it usual conditions}).
Let $\{\mathbf S(t)\}_{t\geq 0}\in\R^d$ be $d$ asset prices in a given market,
where we define
\begin{equation}
\label{eq:basic-def}
\mathbf S(t)=\mathcal P\mathbf X(t)\,, t\geq 0
\end{equation}
for an adapted stochastic process $\{\mathbf X(t)\}_{t\geq 0}\in\R^n$ and 
$\mathcal P\in\R^{d\times n}$. The matrix $\mathcal P$ is hereafter referred to as the 
{\it pricing matrix}, and $\mathbf X$ the {\it factor process} of the market. 
For convenience, we assume that the number of factors $n$ is 
at least equal to the number of assets $d$, i.e., $n\geq d$. We reserve the notation
$\{\mathbf e_i\}_{i=1}^k$ for the canonical basis vectors in $\R^k$, where the dimension $k\in\mathbb N$ will be clear from the 
context.
\begin{definition}
The pricing matrix $\mathcal P$ is called {\it minimal} if all factors $X_i(t), i=1,\ldots,n$ are
represented in $\mathbf S(t)$. 
\end{definition}
The definition simply says that we have all necessary factors to define the price dynamics $\mathbf S$. It {\it does not} say that all factors are present in each
price coordinate $S_j(t), j=1,\ldots,d$. 
\begin{lemma}
$\mathcal P$ is minimal if and only if $\mathbf e_i\notin\text{ker}(\mathcal P)$ for all $i=1,\ldots,n$.
\end{lemma}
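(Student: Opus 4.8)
The plan is to translate the informal notion of a factor being ``represented'' in $\mathbf S$ into an algebraic condition on the columns of $\mathcal P$, and then to recognise that condition as the stated kernel property. Writing the relation \eqref{eq:basic-def} coordinatewise gives $S_j(t)=\sum_{i=1}^n \mathcal P_{ji}X_i(t)$ for $j=1,\dots,d$, so the factor $X_i$ enters the price dynamics precisely through the coefficients $(\mathcal P_{1i},\dots,\mathcal P_{di})^{\top}$, that is, through the $i$-th column of $\mathcal P$. The natural reading of the definition is therefore that $X_i$ is represented in $\mathbf S$ if and only if it appears with a nonzero coefficient in at least one coordinate $S_j$, i.e. if and only if the $i$-th column of $\mathcal P$ is not the zero vector.

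The key observation linking this to the kernel is that $\mathcal P\mathbf e_i$ is exactly the $i$-th column of $\mathcal P$, since $\mathbf e_i$ is the $i$-th canonical basis vector of $\R^n$. Consequently the $i$-th column of $\mathcal P$ vanishes if and only if $\mathcal P\mathbf e_i=\mathbf 0$, i.e. if and only if $\mathbf e_i\in\text{ker}(\mathcal P)$. Combining this with the previous paragraph, the factor $X_i$ is represented in $\mathbf S$ if and only if $\mathbf e_i\notin\text{ker}(\mathcal P)$.

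To conclude I would simply quantify over $i$, handling the two implications by contraposition. By definition $\mathcal P$ is minimal exactly when \emph{every} factor $X_1,\dots,X_n$ is represented in $\mathbf S$; by the equivalence just established this holds if and only if $\mathbf e_i\notin\text{ker}(\mathcal P)$ for every $i=1,\dots,n$, which is the claim. For the converse direction, the failure of minimality means some $X_{i_0}$ is absent from all price coordinates, so the $i_0$-th column of $\mathcal P$ vanishes, which is precisely $\mathbf e_{i_0}\in\text{ker}(\mathcal P)$.

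The statement is elementary, so I do not anticipate a genuine obstacle; the only point requiring care is pinning down the meaning of ``represented'', since the definition is phrased informally. The sketch commits to the interpretation that a factor is represented exactly when its column in $\mathcal P$ is nonzero, and I would state this explicitly at the outset, so that the whole equivalence reduces to the trivial fact that $\mathcal P\mathbf e_i$ reads off the $i$-th column of $\mathcal P$.
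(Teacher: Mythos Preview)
Your proposal is correct and follows essentially the same approach as the paper: both arguments expand $\mathbf S(t)$ linearly in the factors and observe that the coefficient attached to $X_i$ is precisely $\mathcal P\mathbf e_i$ (the $i$-th column of $\mathcal P$), so that $X_i$ fails to appear in $\mathbf S$ exactly when $\mathbf e_i\in\ker(\mathcal P)$. The paper writes this via $\mathbf S(t)=\sum_j X_j(t)\mathcal P\mathbf e_j$ rather than coordinatewise, but the content is identical.
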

\begin{proof}
Obviously,
$$
\mathbf X(t)=\sum_{j=1}^nX_j(t)\mathbf e_j,
$$
where $X_j(t)=\mathbf X^{\top}(t)\mathbf e_j$. Hence,
$$
\mathbf S(t)=\sum_{j=1}^nX_j(t)\mathcal P \mathbf e_j.
$$ 
If $\mathbf e_i\in\text{ker}(\mathcal P)$ for given $i\in\{1,\dots,n\}$, then $\mathbf S$ will not depend on
$X_i$. Opposite, if $\mathbf S$ is not depending on $X_i$, then $\mathcal P \mathbf e_i=0$. 
\end{proof}
We restrict our considerations to minimal pricing matrices $\mathcal P$. 
We also confine ourselves to non-degenerate markets, that is,
markets where we have $d$ distinct price processes for the $d$ assets. Hence, we assume that $\mathbb P(S_k(t)=S_j(t)\,, t\geq 0)=0$ for any $k,j=1,\ldots,d$ with $k\neq j$. A sufficient condition for this to hold is when the $d$ row vectors of $\mathcal P$ are linearily 
independent, that is, when $\text{rank}(\mathcal P)=d$. We restrict our analysis to this case. 

Assuming the price process $\mathbf S$ is defined by \eqref{eq:basic-def} with $\mathcal P$ being a minimal pricing matrix having
full rank, we define
cointegration as follows:
Let $P_{\mathbf X}(t,\cdot)$ denote the probability distribution of $\mathbf X(t)$ defined on $\mathcal{B}(\R^n)$, the 
Borel $\sigma$-algebra on $\R^n$. 
For any $\mathbf x\in\R^n$, we denote by $P_{\mathbf x^{\top}\mathbf X}(t,\cdot)$ the probability
distribution of the real-valued random variable $\mathbf x^{\top}\mathbf X(t)$. Furthermore, denote by 
$\Psi_{\mathbf X}(t,\mathbf z)$ the characteristic function of $\mathbf X(t)$,
defined for $\mathbf z\in\R^n$ as
$$
\Psi_{\mathbf X}(t,\mathbf z)=\E\left[\e^{\mathrm{i}\mathbf z^{\top}\mathbf X(t)}\right].
$$
Sometimes, one is using the cumulant function instead of the characteristic function, where
the cumulant $\kappa_{\mathbf X}(t,\mathbf z):=\log\Psi_{\mathbf X}(t,\mathbf z)$ with $\log$ denoting the distinguished logarithm
(see  e.g. Sato~\cite{Sato}). We see that 
$\Psi_{\mathbf X}(t,\mathbf z)=\widehat{P}_{\mathbf X}(t,\mathbf z)$, where $\widehat{P}_{\mathbf X}(t,\mathbf z)$ is the Fourier transform of the 
distribution $P_{\mathbf X}(t,\cdot)$.  
\begin{definition}
[Definition of cointegration] We say that $\mathbf S$ is cointegrated if there exists
$\mathbf c\in\R^d$ and a probability distribution $\mu_{\mathbf c}$ on $\mathcal B(\R)$ such that 
$P_{\mathbf c^{\top}\mathcal P \mathbf X}(t,\cdot)$ converges to $\mu_{\mathbf c}$ when $t\rightarrow\infty$. We call
$\mathbf c$ a cointegration vector for $\mathbf S$.
\label{def:cointegration}
\end{definition}
In the definition of cointegration, the convergence is in the sense of probability measures (see Def.~2.2 in Sato~\cite{Sato}).  
The definition of cointegration means that there exists a linear combination of the 
price process vector $\{\mathbf S(t)\}_{t\geq 0}$ which admits a limiting probability distribution, where the linear combination is 
represented by the cointegration vector $\mathbf c$. We recall that $\mathbf c^{\top}\mathbf S(t)=\mathbf c^{\top}\mathcal P \mathbf X(t)$, and thus
$P_{\mathbf c^{\top}\mathcal P \mathbf X}(t,\cdot)$ is the probability distribution
of $\mathbf c^{\top}\mathbf S(t)$, which must converge to a probability distribution when time tends to infinity in order to achieve cointegration.

Definition~\ref{def:cointegration} includes trivially the case $\mathbf c=\mathbf 0\in\R^{d}$, since 
$P_{\mathbf 0^{\top}\mathcal P \mathbf X}(t,\cdot)=\delta_{\mathbf 0}(\cdot)$
with $\delta_{\mathbf 0}$ being the Dirac measure at zero. From a practical viewpoint, we are obviously not interested in this 
degenerate case of
a cointegration vector, but include $\mathbf c=\mathbf 0$ in any case for completeness. 
If we choose $n=d$ and $\mathcal P=I$, the $d\times d$ identity matrix, we have 
$\mathbf S=\mathbf X$. Thus the definition of cointegration can also be directly applied for the 
factor process.

As the next result shows, cointegration can be characterized by convergence of characteristic functions
when time tends to infinity:
\begin{proposition}
[Cumulant characterisation of cointegration] If $S$ is cointegrated with cointegration vector 
$\mathbf c\in\R^d$, 
then $\lim_{t\rightarrow\infty}\Psi_{\mathbf X}(t,z\mathcal P^{\top}\mathbf c)=\Psi_{\mu_{\mathbf c}}(z)$ uniformly (in $z\in\R$) on any compact set, with $\Psi_{\mu_{\mathbf c}}$ being the characteristic function of the distribution $\mu_{\mathbf c}$. 
Opposite, if there exists a $\mathbf c\in\R^d$ and a complex-valued function $z\mapsto\Psi_{\mathbf c}(z)$ on $\R$ which is continuous at $z=0$ 
such that $\lim_{t\rightarrow\infty}\Psi_{\mathbf X}(t,z\mathcal P^{\top}\mathbf c)=\Psi_{\mathbf c}(z)$
for every $z\in\R$, then
$\mathbf S$ is cointegrated with cointegration vector $\mathbf c$.
\label{prop:characterisation}
\end{proposition}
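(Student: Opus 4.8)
The plan is to reduce the statement to the classical continuity theorem for characteristic functions (the continuity theorem, see Sato~\cite{Sato}), after first recognising that the quantity $\Psi_{\mathbf X}(t,z\mathcal P^{\top}\mathbf c)$ is nothing but the characteristic function of the scalar random variable $\mathbf c^{\top}\mathbf S(t)$. Indeed, since $\mathbf c^{\top}\mathbf S(t)=\mathbf c^{\top}\mathcal P\mathbf X(t)=(\mathcal P^{\top}\mathbf c)^{\top}\mathbf X(t)$, one computes
$$
\Psi_{\mathbf X}(t,z\mathcal P^{\top}\mathbf c)=\E\!\left[\e^{\mathrm{i}z(\mathcal P^{\top}\mathbf c)^{\top}\mathbf X(t)}\right]=\E\!\left[\e^{\mathrm{i}z\,\mathbf c^{\top}\mathbf S(t)}\right]=\widehat{P}_{\mathbf c^{\top}\mathcal P\mathbf X}(t,z),
$$
so that convergence of these functions of $z$ is exactly convergence of the characteristic functions of the one-dimensional laws $P_{\mathbf c^{\top}\mathcal P\mathbf X}(t,\cdot)$. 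This identity turns both implications into the standard correspondence between weak convergence of $P_{\mathbf c^{\top}\mathcal P\mathbf X}(t,\cdot)$ and convergence of its Fourier transform.

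For the forward implication I would argue as follows. Cointegration means, by Definition~\ref{def:cointegration}, that $P_{\mathbf c^{\top}\mathcal P\mathbf X}(t,\cdot)\to\mu_{\mathbf c}$ weakly as $t\to\infty$. The continuous-time limit is handled by passing to an arbitrary sequence $t_k\to\infty$: along each such sequence weak convergence holds, and the continuity theorem yields pointwise convergence of the characteristic functions to $\Psi_{\mu_{\mathbf c}}$. To upgrade this to uniformity on compact sets I would invoke tightness: a weakly convergent family of probability measures is tight, so for any $\varepsilon>0$ there is $M>0$ with $P_{\mathbf c^{\top}\mathcal P\mathbf X}(t,[-M,M]^{c})<\varepsilon$ for all $t$. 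Using $|\e^{\mathrm{i}zx}-\e^{\mathrm{i}z'x}|\le|z-z'|\,|x|$ on $[-M,M]$ and the crude bound $2$ outside, one obtains $|\widehat{P}_{\mathbf c^{\top}\mathcal P\mathbf X}(t,z)-\widehat{P}_{\mathbf c^{\top}\mathcal P\mathbf X}(t,z')|\le M|z-z'|+2\varepsilon$ uniformly in $t$, i.e.\ equicontinuity of the family $\{\widehat{P}_{\mathbf c^{\top}\mathcal P\mathbf X}(t,\cdot)\}_t$. Equicontinuity together with pointwise convergence delivers uniform convergence on every compact set, as claimed.

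For the converse I would apply the other half of the continuity theorem. Fix any sequence $t_k\to\infty$; by hypothesis $\widehat{P}_{\mathbf c^{\top}\mathcal P\mathbf X}(t_k,\cdot)\to\Psi_{\mathbf c}$ pointwise, and $\Psi_{\mathbf c}$ is continuous at $z=0$. The theorem then guarantees that $\Psi_{\mathbf c}$ is the characteristic function of a probability measure $\mu_{\mathbf c}$ and that $P_{\mathbf c^{\top}\mathcal P\mathbf X}(t_k,\cdot)\to\mu_{\mathbf c}$ weakly. Because the limiting function $\Psi_{\mathbf c}$, and hence the limit measure $\mu_{\mathbf c}$, does not depend on the chosen sequence, the full limit $P_{\mathbf c^{\top}\mathcal P\mathbf X}(t,\cdot)\to\mu_{\mathbf c}$ as $t\to\infty$ follows, which is precisely cointegration with vector $\mathbf c$.

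The routine identity in the first paragraph carries all the conceptual weight; once it is in place the two directions are essentially citations of the continuity theorem. The only genuinely delicate point is the uniform-on-compacts claim in the forward direction, since the bare continuity theorem only gives pointwise convergence, and it is the tightness/equicontinuity estimate that closes this gap. A secondary piece of care is the reduction of the $t\to\infty$ limit to sequential limits so that the sequential form of the continuity theorem applies; this is harmless precisely because the candidate limit is identified independently of the sequence.
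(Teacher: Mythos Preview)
Your proposal is correct and follows essentially the same route as the paper: both reduce the statement to the classical L\'evy continuity theorem after the identification $\Psi_{\mathbf X}(t,z\mathcal P^{\top}\mathbf c)=\widehat{P}_{\mathbf c^{\top}\mathcal P\mathbf X}(t,z)$. The only difference is one of packaging: the paper simply cites Sato~\cite[Prop.~2.5(vi) and (viii)]{Sato} for the two directions, whereas you unpack the uniform-on-compacts part via a tightness/equicontinuity argument (which is in effect a proof sketch of Prop.~2.5(vi)). One small wording point: your claim that the family $\{P_{\mathbf c^{\top}\mathcal P\mathbf X}(t,\cdot)\}$ is tight ``for all $t$'' is slightly loose; weak convergence only gives uniform tightness for all sufficiently large $t$, but that is all you need for the limit $t\to\infty$.
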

\begin{proof}
By Definition~\ref{def:cointegration} of cointegration we have that $P_{\mathbf c^{\top}\mathcal P \mathbf X}(t,\cdot)\rightarrow\mu_{\mathbf c}$
for a probability distribution $\mu_{\mathbf c}$. It is well-known (see e.g., Sato~\cite[Prop.~2.5 (vi)]{Sato}) that this
implies $\Psi_{\mathbf X}(t,z\mathcal P^{\top}\mathbf c)\rightarrow\Psi_{\mu_{\mathbf c}}(z)$ as $t\rightarrow\infty$ on any compact set. This shows the first part. 

If for a given $\mathbf c\neq 0\in\R^d$, there exists a $\Psi_{\mathbf c}$ such that $\Psi_{\mathbf X}(t,z\mathcal P^{\top}\mathbf c)\rightarrow\Psi_{\mathbf c}(z)$ for every $z\in\R$, then Sato \cite[Prop.~2.5 (viii)]{Sato} ensures the 
existence of a probability distribution $\mu_{\mathbf c}$ with characteristic function $\Psi_{\mathbf c}$ as long as
$z\mapsto\Psi_{\mathbf c}(z)$ is continuous at $z=0$. But this means that $P_{\mathbf c^{\top}\mathcal P \mathbf X}(t,\cdot)\rightarrow\mu_{\mathbf c}$.
The result follows. 
\end{proof}

If $\mathbf c$ is a cointegration vector for $\mathbf S$, then by appealing to Definition~\ref{def:cointegration}, 
$\mathcal P^{\top}\mathbf c\in\R^d$ is a cointegration vector for the factor process $\mathbf X$. Opposite, if $\mathbf a\in\R^n$ is a cointegration vector for $\mathbf X$, that is, there exists a distribution
$\mu$ such that $P_{\mathbf a^{\top}\mathbf X}(t,\cdot)\rightarrow\mu$, then for any $\mathbf c\in\R^d$ for which $\mathcal P^{\top}\mathbf c=\mathbf a$ becomes a 
cointegration vector for $\mathbf S$. We note that such a $\mathbf c$ may fail to exist, so even if $\mathbf X$ admits a cointegration
vector, it may not give rise to a cointegration vector for $\mathbf S$. If $d=n$, then $\mathbf c:=\mathcal P^{-\top}\mathbf a$ since $\mathcal P$ is invertible due to 
the full rank assumption. However, the typical situation is that $d<n$, and then the linear system $\mathcal P^{\top}\mathbf c=\mathbf a$ is over-determined and in general will not possess a solution. 

We have the following convenient definition:
\begin{definition}
Denote by $\mathcal C_{\mathbf X}$ the set of all cointegration vectors for $\mathbf X$ 
and $\mathcal C_{\mathbf S}$ the set of all cointegration vectors for $\mathbf S$. 
\end{definition}
We note from the discussion above that if $\mathbf c\in\mathcal C_{\mathbf S}$, then $\mathcal P^{\top}\mathbf c\in\mathcal C_{\mathbf X}$. Hence,
$\mathcal P^{\top}\mathcal C_{\mathbf S}\subset\mathcal C_{\mathbf X}$. For many specifications of $\mathcal P$, this inclusion
is strict, telling that the set of cointegration vectors for $\mathbf S$ is restricted compared to the range of cointegration possibilities given by the vector $\mathbf X$. 
But if $\mathbf a\in\mathcal C_{\mathbf X}$ is in the image of $\mathcal P^{\top}$, then we have the existence of a unique $\mathbf c\in\R^d$ such that $\mathcal P^{\top}\mathbf c=\mathbf a$,  that is, $\mathbf c\in\mathcal C_{\mathbf S}$. Uniqueness of 
$\mathbf c$ follows from the fact that the $n\times d$ matrix $\mathcal P^{\top}$ has
full column rank $d$. In particular, if $\mathcal C_{\mathbf X}\subset\text{Range}(\mathcal P^{\top})$, then 
$\mathcal P^{\top}\mathcal C_{\mathbf S}=\mathcal C_{\mathbf X}$. 

We define a {\it cointegrated pricing system}:
\begin{definition}
If $\mathcal P\in\R^{d\times n}$ is a pricing matrix, i.e., minimal and with $\text{rank}(\mathcal P)=d$, and $\mathbf c\in\R^d$ is such that 
$\mathcal P^{\top}\mathbf c\in\mathcal S_{\mathbf X}$, 
we say that $(\mathcal P,\mathbf c)$ is a cointegrated pricing system for the factor process $\mathbf X$. 
\end{definition}
If $(\mathcal P,\mathbf c)$ is a cointegrated pricing system for the factor process $\mathbf X$, we can define
a system of prices $\mathbf S(t)=\mathcal P\mathbf X(t)$ which becomes cointegrated for the vector $\mathbf c$, 
according to Definition~\ref{def:cointegration}. To a given $\mathbf a\in\mathcal S_{\mathbf X}$, there may exist many 
cointegrated pricing systems $(\mathcal P,\mathbf c)$; indeed all possible combinations of 
pricing matrices $\mathcal P$ and vectors $\mathbf c$ such that
$\mathcal P^{\top}\mathbf c=\mathbf a$. 

Let us return to Example~\ref{motivating-ex}, where we considered two spot price dynamics given by 
\eqref{def:canonical}. We recall the pricing matrix $\mathcal P$ in \eqref{eq:P-LS}, and the factor process $\mathbf X(t)=(X_1(t),X_2(t),X_3(t))$
for $X_3$ a drifted Brownian motion and $(X_1,X_2)$ a bivariate Ornstein-Uhlenbeck process.  
As $\mathcal P$ has two independent row
vectors, it has full rank, $\text{rank}(\mathcal P)=2$. Moreover, we easily see that $\mathcal P\mathbf e_i\neq 0$ for $i=1,2,3$. Indeed,
$\text{ker}(\mathcal P)$ has dimension 1 and is spanned by the vector $(1,1,-1)$. We conclude that $\mathcal P$ is a pricing matrix
satisfying the assumptions of minimality and full rank. In this model, $(X_1,X_2)$ is a 2-dimensional
Ornstein-Uhlenbeck process, 
$$
X_i(t)=X_i(0)\e^{-\alpha_i t}+\int_0^t\eta_i\e^{-\alpha_i(t-s)}\,dB_i(s)\,, i=1,2.
$$
We find
$$
(X_1(t),X_2(t))\stackrel{d}{\rightarrow}\mathcal N\left(0,C\right)
$$
where the covariance matrix $C\in\R^{2\times 2}$ is 
$$
C=\left[\begin{array}{cc} \frac{\eta_1^2}{2\alpha_1} & \rho\frac{\eta_1\eta_2}{\alpha_1+\alpha_2} \\ 
\rho\frac{\eta_1\eta_2}{\alpha_1+\alpha_2} & \frac{\eta_2^2}{2\alpha_2}\end{array}\right]\,.
$$
Here, $\rho$ is the correlation between $B_1$ and $B_2$, and $\stackrel{d}{\rightarrow}$ denotes limit in
distribution. Hence, since $(X_1,X_2)$ has a limiting distribution, we find from the non-stationarity of $X_3$ that 
$\mathcal C_{\mathbf X}=\{\mathbf a\in\R^3\,|\,a_3=0\}$. In particular, $\mathcal C_{\mathbf X}$ is a vector space with basis vectors $\mathbf e_1$ and 
$\mathbf e_2$. We remark that in general,
$\mathcal C_{\mathbf X}$ does not need to be a vector space. If we for example substitute $X_1$ and $X_2$ with two stationary
stochastic processes which are not jointly stationary, we have that a linear combination of the two may fail to be stationary even 
though they are marginally stationary. We find further that $\mathcal C_{\mathbf S}$ is the vector space spanned by the vector $(1,-1)^{\top}$.
Finally, the range of $\mathcal P^{\top}$ is spanned by the two vectors $(1,0,1)^{\top}$ and $(0,1,1)^{\top}$, i.e., the row vectors
of $\mathcal P$. Thus, if $\mathbf a\in\mathcal C_{\mathbf X}$, then $\mathbf a$ is in the range of $\mathcal P^{\top}$ {\it only} when $\mathbf a=k(1,-1,0)^{\top}$ for
$k\in\R$. Therefore, $\mathcal P^{\top}\mathcal C_{\mathbf S}\subset\mathcal C_{\mathbf X}$, with a strict inclusion in this case. 
From these considerations, we also see that there exists many pricing systems $(\mathcal P,\mathbf c)$,
indeed, for a fixed $\mathcal P$ we have a continuum of $\mathbf c\in\mathcal C_{\mathbf S}$. But we may also choose different
pricing matrices. For example, if 
$$
\mathcal P=\left[\begin{array}{ccc} a & b & w \\ u & v & 1 \end{array}\right]\,.
$$ 
for any $a,b,u,v,w\in \R$ such that $\mathcal P$ is minimal and non-degenerate, we can use 
$\mathbf c=(1,-w)^{\top}$ to define a pricing system, where $\mathbf c^{\top}\mathcal P \mathbf X(t)=(a-uw)X_1(t)+(b-vw)X_2(t)$. Such a
pricing matrix $\mathcal P$ is relevant when modelling two commodities that do not share the same denominator. For example,
gas and coal typically have different energy units than power, and we will have a conversion factor (heat rate) between them modelled by
$w$ in the present context.

The particular example discussed above motivates some further analysis of the set $\mathcal C_{\mathbf X}$. In many situations, as in the example, we can single out a subset of factors from $\mathbf X$ which has a limit in distribution, i.e.,
$\mathbf X^m(t):=(X_1(t),\ldots,X_m(t))^{\top}$ with $m\leq n$ for which $P_{\mathbf X^m}(t,\cdot)\rightarrow\mu^m$ for
a probability distribution $\mu^m$ on $\R^m$ as $t\rightarrow\infty$. Then $\mathcal C_{\mathbf X}^m\subset\mathcal C_{\mathbf X}$, where
$$
\mathcal C_{\mathbf X}^m:=\{\mathbf a\in\R^n\,|\,a_{m+1}=...=a_n=0\}.
$$
Remark that we do not in general have equality between $\mathcal C_{\mathbf X}^m$ and $\mathcal C_{\mathbf X}$ as there may be 
cointegration between some of the factors $X_{m+1},\ldots,X_n$ that may not hold jointly with the first $m$ factors. 
For convenience, we have assumed that the subset of factors which has a limiting distribution consists of the first $m$. Since we may re-label the factors, this assumption is of course without loss of generality. We observe that $\mathcal C_{\mathbf X}^m$ is a vector space, and that in
the case $m=n$, we trivially have $\mathcal C_{\mathbf X}^m=\mathcal C_{\mathbf X}=\R^n$. When $m<n$, any 
$(\mathcal P,\mathbf c)$ such that 
$\mathcal P^{\top}\mathbf c\in\mathcal C_{\mathbf X}^m$ will be a cointegrated pricing system for $\mathbf X$.
We observe that these considerations are in line with the example above, where $\mathcal C_{\mathbf X}^2=\mathcal C_{\mathbf X}$ since the two
first factors have jointly a limiting distribution, while the last factor is non-stationary. We have the following general result: 
\begin{lemma}
Suppose $P_{\mathbf X^{n-1}}(t,\cdot)$ has a limiting distribution,  while 
$P_{X_n}(t,\cdot)$ does not have a limiting distribution. If $X_n$ is independent of $\mathbf X^{n-1}$, then
$\mathcal C_{\mathbf X}=\mathcal C_{\mathbf X}^{n-1}$. 
\end{lemma}
\begin{proof}
Let $\mathbf c\in\mathcal C_{\mathbf X}$ with $c_n\neq 0$. By independence, we find for $z\in\R$
\begin{align*}
\Psi_{\mathbf c^{\top}\mathbf X}(t,z)&=\E[\e^{\mathrm{i}z\mathbf c^{\top}\mathbf X(t)}] \\
&=\E[\e^{\mathrm{i}z(c_1X_1(t)+\cdots+c_{n-1}X_{n-1}(t)}]\E[\e^{\mathrm{i}zc_nX_n(t)}] \\
&=\Psi_{\mathbf X^{n-1}}(t,z(c_1,\ldots,c_{n-1})^{\top})\Psi_{X_n}(t,zc_n)\,.
\end{align*}
For every $z$, $\Psi_{\mathbf X^{n-1}}(t,z(c_1,\ldots,c_{n-1})^{\top})$ will have a limit, while there exists a Borel set
$A_0$ with positive Lebesgue measure such that $\Psi_{X_n}(t,x)$ does not have a limit for every $x\in A_0$
(this could be the whole of the real line, or some subset with infinite Lebesgue measure). But then 
for all $z\in A_0/c_n$ we have that 
 $\Psi_{X_n}(t,zc_n)$ does not have a limit, and in conclusion $\Psi_{\mathbf c^{\top}\mathbf X}(t,z)$ does not have a limit
for every $z\in\R$ as $t\rightarrow\infty$. This violates the assumption that $\mathbf c\in\mathcal C_{\mathbf X}$ with $c_n\neq 0$. Thus, $c_n=0$, showing the claim.
\end{proof}
Remark that in Example~\ref{motivating-ex}, the non-stationary drifted Brownian motion is not necessarily independent 
of the two other factors, showing that the assumption of independence is sufficient, but not necessary.

Notice that if $\mathbf X^m$ admits a stationary limit, and $(X_{m+1},...,X_{n})$ is dependent on $X^m$, we may 
have non-trivial $\mathbf c\in\mathcal C_{\mathbf X}\backslash\mathcal C_{\mathbf X}^m$. Indeed, consider $n=3$ and the processes
$$
X_1(t)=\int_0^t\exp(-\alpha_1(t-s))\,dB_1(s)\,,
$$
and
$$
X_i(t)=\mu t+\int_0^t\exp(-\alpha_i(t-s))\,dB_i(s), i=2,3\,,
$$
for constants $\mu, \alpha_i>0, i=2,3$ and a trivariate Brownian motion $(B_1,B_2,B_3)$ being correlated. 
Then $\mathbf X=(X_1,X_2,X_3)^{\top}$ have dependent coordinates, and for any vector $\mathbf c=(a,b,-b)^{\top}\in\R^3, a,b\in\R$, we find that 
$$
\mathbf c^{\top}\mathbf X(t)=a\int_0^t\e^{-\alpha_1(t-s)}\,dB_1(s)+b\left(\int_0^t\e^{-\alpha_2(t-s)}\,dB_2(s)-\int_0^t\e^{-\alpha_3(t-s)}\,dB_3(s)\right)
$$ 
which will converge in distribution to a normally distributed random variable with zero mean as $t\rightarrow\infty$. 
Hence, $\mathbf c\in\mathcal C_{\mathbf X}$. Here, $X_1$ has a limit in distribution, while
$X_i, i=2,3$ both will have a mean $\mu t$ and thus there does not exist any limiting distribution. This is 
an example with $m=1$ and $n=3$. We remark that the example is slightly pathological, as we could have 
assumed $n=4$ with $X_4(t)=\mu t$, and defined $\widetilde{X}_i(t)=\int_0^t\exp(-\alpha_i(t-s)\,dB_i(s), i=2,3$. Then, with 
$\mathbf X:=(X_1,\widetilde{X}_2,\widetilde{X}_3,X_4)^{\top}$ we are back to the situation with $m=n-1=3$ and $X_4$ being (trivially) independent of $\mathbf X^3=(X_1,\widetilde{X}_2\widetilde{X}_3)^{\top}$.

We have the following remark, which gives a practical consequence of our considerations so far: 
\begin{remark}
In a practical application we can model a system of $d$ commodity price dynamics with cointegration as follows: first, we assume that
we have $m$ factor processes which jointly admit a limiting distribution, and $n-m$ non-stationary processes, with $n\geq d$.  
Then we know that any $(\mathcal P,\mathbf c)$ such that $\mathcal P^{\top}\mathbf c\in\mathcal C^m_{\mathbf X}$ will be a cointegrated pricing system. This
provides us with a constraint on the possible specifications of $(\mathcal P,\mathbf c)$ which can be used in the next step on specifying 
parametric models for the factor processes and estimating on data. As long as we know that $\mathbf X^m$ admits a limiting distribution,
we can characterize a set of admissible pricing systems $(\mathcal P,\mathbf c)$ before any further specification and estimation on data. 
\end{remark}

In the anaysis so far we have exclusively thought of the price dynamics $\mathbf S$ in \eqref{eq:basic-def} as being on an arithmetic form. However, commonly one models cointegration on the logarithm of prices, $\ln\mathbf  S:=(\ln S_1,\ldots,\ln S_d)^{\top}$. If we suppose that 
$\ln \mathbf S$ satisfies 
\begin{equation}
\label{def:geom-factor-model}
\ln\mathbf  S(t)=\mathcal P\mathbf  X(t),\,t\geq 0,
\end{equation}
we can repeat the analysis above for a {\it geometric} price dynamics. Energy markets like gas and power have frequently experienced 
negative prices, and hence an arithmetic price dynamics may be attractive.

\subsection{Particular model specifications}

Recalling Example~\ref{motivating-ex}, we may for the cross-commodity spot price dynamics \eqref{def:canonical} assume a general (non-stationary) dynamics $X_3$ and a bivariate process 
$\mathbf Y:=(X_1,X_2)^{\top}$ with the property that $P_{\mathbf Y}(t,\cdot)\rightarrow P_{\infty}(\cdot)$ for some probability distribution $P_{\infty}$ on $\R^2$. Then, we find for any $\mathbf c=(k,-k)^{\top}\in\mathcal C_{\mathbf S}$ that 
the characteristic function of the random variable $\mathbf c^{\top}\mathcal P\mathbf X(t)$ is
$$
\Psi_{\mathbf c^{\top}\mathcal P\mathbf X}(t,z)=\E\left[\e^{\mathrm{i}zk(X_1(t)-X_2(t))}\right]=\Psi_{\mathbf X}(t,z\mathcal P^{\top}\mathbf c)=
\widehat{P}_{\mathbf Y}(t,z\mathbf c^{\top})\rightarrow\widehat{P}_{\infty}(z\mathbf c^{\top})\,,
$$
for every $z\in\R$ as $t\rightarrow\infty$. But then by Prop.~\ref{prop:characterisation}, $\mathbf c^{\top}\mathcal P\mathbf X(t)$ 
has a limiting distribution
$\mu_{\mathbf c}$ with characteristic function $\widehat{\mu}_{\mathbf c}(z)=\widehat{P}_{\infty}(z\mathbf c^{\top})$. This shows that we
may significantly go beyond the dynamics discussed in \eqref{def:canonical} that preserves cointegration, 
and has a marginal structure with a (long term) non-stationary factor and a (short term) factor modelling the "stationary" variations.  
In this Subsection we discuss various other particular specifications of these factors beyond classical Ornstein-Uhlenbeck models.

The so-called L\'evy stationary (LS) processes provides us with a flexible class of stationary models
which can be applied as dynamics for $\mathbf X^m=(X_1,\ldots,X_m^{\top})$, $m\in\mathbb N$. To this end, assume 
\begin{equation}
\label{def:ls}
\mathbf X^m(t)=\int_{-\infty}^tG(t-s)\,d\mathbf L(s),
\end{equation}
where $\mathbf L=(L_1,\ldots,L_k)^{\top}$ is a two-sided square integrable $k$-dimensional L\'evy process with zero mean 
and $u\mapsto G(u)$ is a measurable mapping from $\mathbb R_+$ into the space of $m\times k$ matrices with
elements $g_{ij}\in L^2(\R_+), i=1,\ldots,m, j=1,\ldots,k$. 
We remark that the assumption on the $g_{ij}$'s ensures that $\mathbf X^m$ is a well-defined mean zero square integrable stochastic process with values in $\R^m$. LS processes form a subclass of the more general L\'evy semistationary processes considered in e.g. Barndorff-Nielsen, Benth and Veraart~\cite{BNBV}.

As the following Lemma shows, $\mathbf X^m=(X_1,\ldots,X_m)^{\top}$ is strictly stationary:
\begin{lemma}
\label{lemma:strict-stat-ls}
The process $\mathbf X^m=(X_1,\ldots,X_m)^{\top}$ defined in \eqref{def:ls} is a strictly stationary process, that is,
for any $\tau\geq 0$ and $r\in\mathbb N$, the $m\times r$-dimensional random matrices 
$(\mathbf X^m(t_1+\tau),....,\mathbf X^m(t_r+\tau))$ and $(\mathbf X^m(t_1),....,\mathbf X^m(t_r))$ have the same
probability distribution.
\end{lemma}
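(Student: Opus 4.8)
The plan is to establish strict stationarity by showing that the \emph{joint characteristic function} of $(\mathbf X^m(t_1+\tau),\ldots,\mathbf X^m(t_r+\tau))$ does not depend on the shift $\tau\geq 0$. Since a probability distribution on the space of $m\times r$ matrices is uniquely determined by its characteristic function, the equality of these characteristic functions for all $\tau$, all $r\in\mathbb N$ and all $t_1,\ldots,t_r$ is equivalent to the stated invariance of finite-dimensional distributions. Thus it suffices to fix $\mathbf z_1,\ldots,\mathbf z_r\in\R^m$ and study $\E[\exp(\mathrm i\sum_{l=1}^r\mathbf z_l^{\top}\mathbf X^m(t_l+\tau))]$ as a function of $\tau$.

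First I would rewrite the linear combination $\sum_{l=1}^r\mathbf z_l^{\top}\mathbf X^m(t_l+\tau)$ as a single stochastic integral $\int_{-\infty}^{\infty}f_\tau(s)^{\top}\,d\mathbf L(s)$ against the driving L\'evy process, where the $\R^k$-valued deterministic integrand is
$$
f_\tau(s)=\sum_{l=1}^r\mathbf 1_{\{s\leq t_l+\tau\}}\,G(t_l+\tau-s)^{\top}\mathbf z_l.
$$
The assumption $g_{ij}\in L^2(\R_+)$ guarantees $f_\tau\in L^2(\R;\R^k)$, so, together with square integrability of $\mathbf L$, the integral is well defined. The crucial structural observation is that $G$ enters only through the time difference $t_l+\tau-s$, and hence $f_\tau(s+\tau)=f_0(s)$ for every $s$; this is precisely where the moving-average form of \eqref{def:ls} is used.

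Next I would invoke the cumulant representation for integrals of deterministic functions against a L\'evy process. Writing $\psi_{\mathbf L}$ for the L\'evy exponent of $\mathbf L$, defined by $\E[\exp(\mathrm i\mathbf u^{\top}\mathbf L(1))]=\exp(\psi_{\mathbf L}(\mathbf u))$, one has for deterministic $f\in L^2$ the identity $\E[\exp(\mathrm i\int f^{\top}\,d\mathbf L)]=\exp(\int_{-\infty}^{\infty}\psi_{\mathbf L}(f(s))\,ds)$. This formula is the analytic embodiment of the stationarity and independence of the increments of $\mathbf L$: the \emph{same} symbol $\psi_{\mathbf L}$ appears at every time $s$, with no explicit time dependence. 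Applying it to $f_\tau$, substituting $s\mapsto s+\tau$, and using $f_\tau(s+\tau)=f_0(s)$ together with translation invariance of Lebesgue measure yields
$$
\int_{-\infty}^{\infty}\psi_{\mathbf L}(f_\tau(s))\,ds=\int_{-\infty}^{\infty}\psi_{\mathbf L}(f_0(s))\,ds,
$$
so the joint characteristic function at shift $\tau$ coincides with the one at $\tau=0$, which is the desired conclusion.

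The main obstacle is the rigorous justification of the cumulant formula for deterministic $L^2$-integrands, rather than the change of variables, which is elementary. The formula is proved by approximating $f_\tau$ by simple functions constant on a partition, factoring the characteristic function over the partition by independence and stationarity of the increments, and passing to the limit, controlling convergence through the $L^2$-isometry of the integral and the continuity of $\psi_{\mathbf L}$ at the origin. Once this step is secured, the stationary-increment structure of $\mathbf L$ enters exactly through the time-homogeneity of $\psi_{\mathbf L}$, and the argument closes.
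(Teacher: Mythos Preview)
Your proposal is correct and follows essentially the same approach as the paper: both compute the joint characteristic function via the L\'evy exponent $\psi$ and then perform a change of variables to show that only the time differences (equivalently, not $\tau$) enter. The only cosmetic difference is that the paper first orders the times and explicitly splits the integral over the intervals $(-\infty,t_1],(t_1,t_2],\ldots,(t_{r-1},t_r]$ using independent increments before applying the cumulant on each piece, whereas you package this into a single application of the formula $\E[\exp(\mathrm i\int f^{\top}\,d\mathbf L)]=\exp(\int\psi_{\mathbf L}(f(s))\,ds)$; the content is the same.
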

\begin{proof}
Let $t_{\ell}, \ell=1,\ldots,r$ be an increasing sequence of times on $\R$,
and notice that $(\mathbf X^m(t_1),\mathbf X^m(t_2),\ldots,\mathbf X^m(t_r))^{\top}\in\R^{mr}$. By the independent increment property of
L\'evy processes, it follows that with $\psi$ denoting the cumulant of $\mathbf L(1)$ and $\mathbf z^{\top}=((z^1_1,\ldots,z^m_1),(z_2^1,\ldots,z_2^m),\ldots,(z_r^1,\ldots,z_r^m))\in\R^{mr}$,
\begin{align*}
\E\left[\e^{\mathrm{i}\mathbf z^{\top}(\mathbf X^m(t_1),\ldots,\mathbf X^m(t_r))^{\top}}\right]
&=\E\left[\e^{\mathrm{i}\mathbf z_1^{\top}\int_{-\infty}^{t_1}G(t_1-s)\,d\mathbf L(s)+\cdots\mathrm{i}\mathbf z_r^{\top}\int_{-\infty}^{t_1}G(t_r-s)\,d\mathbf L(s)}\right] \\
&\qquad\times \E\left[\e^{\mathrm{i}\mathbf z_2^{\top}\int_{t_1}^{t_2}G(t_2-s)\,d\mathbf L(s)+\cdots\mathrm{i}\mathbf z_r^{\top}\int_{t_1}^{t_2}G(t_r-s)\,d\mathbf L(s)}\right] \\
&\qquad\times\cdots\times\E\left[\e^{\mathrm{i}\mathbf z_r^{\top}\int_{t_{r-1}}^{t_r}G(t_r-s)\,d\mathbf L(s)}\right] \\
&=\exp\left(\int_{-\infty}^{t_1}\psi\left(G(t_1-s)^{\top}\mathbf z_1+\cdots+G(t_r-s)^{\top}\mathbf z_r\right)\,ds\right) \\
&\qquad\times\exp\left(\int_{t_1}^{t_2}\psi\left(G(t_2-s)^{\top}\mathbf z_2+\cdots+G(t_r-s)^{\top}\mathbf z_r\right)\,ds\right) \\
&\qquad\times\cdots\times\exp\left(\int_{t_{r-1}}^{t_r}\psi(G(t_r-s)^{\top}\mathbf z_r)\,ds\right).
\end{align*}
Here we have used the notation $\mathbf z_i:=(z_i^1,\ldots,z_i^m)^{\top}$.
Thus, 
after a change of variables, we see that the characteristic function of $(\mathbf X^m(t_1),....,\mathbf X^m(t_r))$ depends on 
$(t_2-t_1,t_3-t_2,...,t_r-t_{r-1})$ only, and we can conclude that the probability distribution of 
$(\mathbf X^m(t_1+\tau),....,\mathbf X^m(t_r+\tau))$ equals that of $(\mathbf X^m(t_1),....,\mathbf X^m(t_r))$ for any $\tau>0$. Strict stationarity follows.
\end{proof}
Remark that any linear combination of $X_1,\ldots,X_m$ is strictly stationary whenever 
$(X_1,\ldots,X_m)$ is strictly stationary. If the real-valued process $\{U(t)\}_{t\geq 0}$ is a strictly stationary process, we have that
its probability distribution $P_U(t,\cdot)$ satisfies $P_U(t+\tau,\cdot)=P_U(t,\cdot)$ for all $t\geq 0$, for 
any given $\tau\geq0$. Hence, $P_U(t,\cdot)\equiv P_U(\cdot)$, that is, it is independent of
time $t$. This implies trivially that $P_U(t,\cdot)\rightarrow P_U(\cdot)$ when $t\rightarrow\infty$, and moreover, the characteristic function of $U(t)$ is also independent of $t$. Hence, for any pricing system $(\mathcal P,\mathbf c)$, where $\mathcal P^{\top}\mathbf c\in\mathcal C^m_{\mathbf X}$, we have that $\mathbf c^{\top}\mathcal P\mathbf  X(t)=(\mathbf e_1\mathcal P^{\top}\mathbf c)X_1(t)+\cdots+(\mathbf e_m^{\top}\mathcal P^{\top}\mathbf c)X_m(t)$,
i.e., a linear combination of $X_1, \ldots,X_m$, which is a strictly stationary process. 
We note in passing that if the characteristic
function of a stochastic process $V(t)$ is independent of $t$, it holds that $P_V(t,\cdot)=P_V(\cdot)$. This implies 
stationarity in the sense that the probability distribution of $V(t)$ is invariant of time, however, it does
not necessarily imply {\it strict} stationarity. In Benth~\cite{benth-eberleinfest}, cointegration models based on 
LS processes were proposed and analysed. 

An example of an LS process is given by 
$$
X_i(t)=\eta_i\int_{-\infty}^t\e^{-\alpha_i(t-s)}\,dB_i(s),
$$ 
for $i=1,\ldots,m$ with $\mathbf B=(B_1,\ldots,B_m)^{\top}$ being a two-sided $m$-dimensional Brownian motion (possibly correlated)
and $\alpha_1,\ldots,\alpha_m$, $\eta_1,\ldots,\eta_m$ positive constants. 
Then it holds that the distribution function of 
$(X_1,\ldots,X_m)$ is time invariant and equal to $\mathcal N(0,C)$, with covariance matrix $C\in R^{m\times m}$ having diagonal
elements $\eta_i^2/(2\alpha_i), i=1,\ldots,m$ and off-diagonal elements $\rho_{ij}\eta_i\eta_j/(\alpha_i+\alpha_j)$ for
$\rho_{ij}$ being the correlation coefficient between $B_i$ and $B_j$, $i\neq j$. 
In fact, this example is a particular case of so-called continuous-time autoregressive moving average (CARMA) processes, as we discuss
next.

For $p\in\mathbb N$, define the matrix $A\in\mathbb R^{p\times p}$ as
\begin{equation}
\label{eq:car-matrix}
A=\left[\begin{array}{cccccc} 0 & 1 & 0 & 0 & ... & 0 \\
0 & 0 & 1 & 0 & ... & 0 \\
.. & . & . & . & ... & . \\
.. & . & . & . & ... & . \\
0 & 0 & 0 & 0 & ... & 1 \\
-\alpha_p & -\alpha_{p-1} & -\alpha_{p-2} & -\alpha_{p-3} & ... & -\alpha_1
\end{array}\right]\,,
\end{equation}
for positive constants $\alpha_k$, $k=1,\ldots, p$.
Consider the $p$-dimensional Ornstein-Uhlenbeck process
\begin{equation}
d\mathbf Y(t)=A\mathbf Y(t)\,dt+\mathbf e_p\,dL(t).
\end{equation}
Here we recall that $\{\mathbf e_i\}_{i=1}^p$ are the $p$ canonical basis vectors in $\mathbb R^p$ and $L$ is a (two-sided) real-valued square-integrable 
L\'evy process with zero mean. 
Following Brockwell~\cite{brockwell}, we define a CARMA($p,q$) process $Z$ for $q<p, p,q\in\mathbb N$ by
\begin{equation}
\label{def-carma-1D}
Z(t)=\mathbf b^{\top} \mathbf Y(t),
\end{equation}
for $\mathbf b\in \mathbb R^p$, where $\mathbf b=(b_0,b_1,\ldots, b_q,0,\ldots,0)^{\top}$ and $b_q=1$. We observe that 
for $q=0$, $\mathbf b=\mathbf e_1$ and we say in this case that $Z$ is a continuous-time autoregressive process of order $p$ (a CAR($p$)-process
in short). We suppose that the $p$ eigenvalues of $A$ have negative real part, which yields that $Z$ is strictly stationary with
$$
Z(t)=\int_{-\infty}^t\mathbf b^{\top}\e^{A(t-s)}\mathbf e_p\,dL(s).
$$ 
Thus, with $G(s):=\mathbf b^{\top}\exp(A(s))\mathbf e_p$, a CARMA($p,q$)-process is an example of a real-valued LS-process. 

We want to apply CARMA-processes as factors in a cointegration model (see Comte~\cite{comte} for an extensive analysis of
cointegration based on CARMA processes). To this end, let $\mathbf X$ be an $n$-dimensional process,
and $\mathbf X^m=(X_1,\ldots,X_m)$ for $m<n$ be an $m$-dimensional CARMA-process. A simple way to define such a process is as follows: 
given an $m$-dimensional two-sided square integrable L\'evy process $\mathbf L=(L_1,\ldots,L_m)^{\top}$ with zero mean. For
$i=1,\ldots,m$, let $X_i$ be as in \eqref{def-carma-1D}, that is, a CARMA($p_i,q_i$)-process driven by $L_i$ and with matrix $A_i\in\mathbb R^{p_i\times p_i}$ having eigenvalues with negative real part. In the notation of LS-processes in 
\eqref{def:ls}, this means that the $m\times m$-matrix-valued function $G(u)$ has diagonal elements 
$g_{ii}(u):=\mathbf b_i^{\top}\exp(A_i(s))\mathbf e_{p_i}$ and off-diagonal elements being zero. 
By Lemma~\ref{lemma:strict-stat-ls}, $\mathbf X^m$ is an $m$-dimensional strictly stationary
process.  Benth and Koekebakker~\cite{BK} consider such models in the context of cointegration. We remark in passing that 
CARMA-processes has been applied to model commodities like oil and power (see e.g. Paschke and Prokopczuk \cite{PP} and
Benth, Kl\"uppelberg, M\"uller and Vos~\cite{BKMV}). Multivariate CARMA-processes going beyond the simple specification we 
consider here have been proposed and analysed by Marquardt and Stelzer~\cite{MS}. Their definition will yield an LS-process
\eqref{def:ls} with the matrix-valued function $G$ having non-zero off-diagonal elements. Thus, we do not only have dependency through the L\'evy processes, but also functional dependencies between the coordinates in vector-valued CARMA process.
Such multivariate CARMA processes is further studied by Schlemm and Stelzer~\cite{SchlemmStelz} and Kevei~\cite{Kevei}. Taking these extensions into account, we have a rich class of stationary processes available for cointegration modelling.

It is well-known (see e.g. Benth and \v{S}altyt\.e Benth~\cite{BSB} and Benth, \v{S}altyt\.e Benth and Koekebakker~\cite{BSBK})
that a CARMA($p,q$)-process on a discrete time scale will define an ARMA($p,q$) time series. Furthermore, as is demonstrated
in Aadland, Benth and Koekebakker~\cite{ABK}, the process $X(t)=\int_0^tZ(s)\,ds$, where $Z$ is a CAR($p$)-process, becomes 
a non-stationary process. Hence, it may serve as a non-stationary factor process in modelling the price dynamics $\mathbf S$. Indeed, 
we can use a set of $n-m$ dependent CAR-processes to define non-stationary processes $X_{m+1},\ldots,X_n$ in this way. 
As Aadland, Benth and Koekebakker~\cite{ABK} show, these processes will become integrated autoregressive times series on a discrete
time scale.  Aadland, Benth and Koekebakker~\cite{ABK} model cointegration in a freight rate market using CAR-processes, both
for the stationary and the non-stationary processes.

\section{Forward pricing under cointegration}
\label{sect:forward}


Denote the forward prices at time $t\geq 0$ of
contracts delivering the underlying assets $\mathbf S=(S_1,\ldots, S_d)^{\top}$ at time $T\geq t$ by 
$\mathbf F(t,T):=(F_1(t,T),\ldots,F_d(t,T))^{\top}\in\R^d$. The price vector
of the $d$ assets are defined by $\mathbf S(t)\in\R^d$ in \eqref{eq:basic-def} with $\mathcal P$ being minimal and of full rank. Thus, we suppose an arithmetic model for the spot market. Assume $\mathbb{Q}\sim\mathbb{P}$ is a pricing measure such that $\mathbf X(t)\in\R^n$ is $\mathbb Q$-integrable for all $t>0$. Then, the forward price vector $\mathbf F(t,T)$ is defined as (see Benth, \v{S}altyt\.e Benth
and Koekebakker~\cite{BSBK}),
\begin{equation}
\mathbf F(t,T)=\mathbb{E}_{\mathbb Q}[\mathbf S(T)\,|\,\mathcal F_t].
\end{equation} 
Hence, by the definition of $\mathbf S$ we find that
\begin{equation}
\mathbf F(t,T)=\mathcal P\mathbb{E}_{\mathbb{Q}}[\mathbf X(T)\,|\,\mathcal F_t]\,.
\end{equation}
To proceed our analysis, the following definition of affinity is convenient:
\begin{definition}
The stochastic process $\{\mathbf X(t)\}_{t\geq 0}$ is said to be {\it affine with respect to $\mathbb Q$},
or $\mathbb Q$-affine for short, if there exist measurable deterministic functions $(t,T)\mapsto\mathcal A(t,T)\in\R^{n\times n}$ and
$(t,T)\mapsto \mathbf a(t,T)\in\R^d$ such that
$$
\mathbb{E}_{\mathbb{Q}}[\mathbf X(T)\,|\,\mathcal F_t]=\mathcal A(t,T)\mathbf X(t)+\mathbf a(t,T)
$$
for $0\leq t\leq T<\infty$.   
\end{definition}
A trivial example of a $\mathbb Q$-affine process $\mathbf X$ is the case when $\mathbf X$ is an $n$-dimensional
$\mathbb Q$-Brownian motion $\mathbf B=(B_1,\ldots,B_n)^{\top}$. Then $\mathbf a=0$, and $\mathcal A$ is the covariance matrix with elements $\rho_{ij}t$ for $\rho_{ii}=1$ and $\rho_{ij}$ being the correlation between $B_i$ and $B_j$, $i\neq j$. 
A less trivial example is provided by Ornstein-Uhlenbeck processes. We show next the affinity property for $\mathbb Q$-semimartingales which are polynomial processes
(see e.g. Cuchiero, Keller-Ressel and Teichmann~\cite{CKRT} and Filipovic and Larsson~\cite{FL} for a definition and analysis of polynomial processes).
\begin{proposition}
\label{prop:polynomial}
Assume the $\mathbb Q$-dynamics of $\mathbf X$ is a polynomial process in  $\R^n$. Then $\mathbf X$ is $\mathbb Q$-integrable and 
$\mathbb Q$-affine, with the functions $(t,T)\mapsto \mathbf a(t,T)$ and $(t,T)\mapsto\mathcal A(t,T)$ being homogeneous, i.e., 
$\mathcal A(t,T)=\mathcal A(T-t)$ and $\mathbf a(t,T)=\mathbf a(T-t)$ (with a slight abuse of notation).
\end{proposition}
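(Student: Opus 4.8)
The plan is to use the defining feature of a polynomial process: its (extended) generator $\mathcal G$ preserves, for each $k$, the finite-dimensional space of polynomials of degree at most $k$. I only need the case $k=1$. Write $H(\mathbf x)=(1,x_1,\ldots,x_n)^\top$ for the basis of the space $\mathrm{Pol}_1$ of affine functions, so that every $p\in\mathrm{Pol}_1$ has a coordinate vector $\vec p\in\R^{n+1}$ with $p=\vec p^\top H$. Since $\mathcal G$ restricts to a linear endomorphism of this $(n+1)$-dimensional space, it is represented by a matrix $G\in\R^{(n+1)\times(n+1)}$ via $\mathcal G p=(G\vec p)^\top H$. The key tool is the moment formula for polynomial processes (see Cuchiero, Keller-Ressel and Teichmann~\cite{CKRT} and Filipovic and Larsson~\cite{FL}),
\[
\E_{\mathbb Q}\big[p(\mathbf X(T))\mid\mathcal F_t\big]=\big(\e^{(T-t)G}\vec p\big)^\top H(\mathbf X(t)),\qquad 0\le t\le T,
\]
valid for every $p\in\mathrm{Pol}_1$.

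First I would settle $\mathbb Q$-integrability. Taking $p=x_i$, the right-hand side of the moment formula is almost surely finite and integrable, which shows that each $X_i(T)$ is $\mathbb Q$-integrable for every $T$. The real content is the moment formula itself, and this is where I expect the main obstacle. For a $\mathbb Q$-semimartingale $\mathbf X$ one applies It\^o's formula to $s\mapsto(\e^{(T-s)G}\vec p)^\top H(\mathbf X(s))$; the time-derivative term and the generator term cancel precisely because $G$ represents $\mathcal G$ on $\mathrm{Pol}_1$, so the process is a local $\mathbb Q$-martingale. The delicate step is upgrading it to a genuine martingale so that conditional expectations may be taken; this rests on the moment bounds that are part of the polynomial-process machinery, and I would invoke them from~\cite{CKRT,FL} rather than reprove them.

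Granting the moment formula, affinity and homogeneity are immediate. Because $\mathcal G$ annihilates constants, $\mathcal G 1=0$, the coordinate vector of the constant polynomial lies in the kernel of $G$, so $G$ is block upper-triangular in the basis $H$,
\[
G=\begin{pmatrix}0 & \mathbf v^\top\\ \mathbf 0 & B\end{pmatrix},
\qquad
\e^{sG}=\begin{pmatrix}1 & \mathbf v^\top\!\int_0^s\e^{uB}\,du\\[2pt] \mathbf 0 & \e^{sB}\end{pmatrix},
\]
for some $\mathbf v\in\R^n$ and $B\in\R^{n\times n}$. Stacking the moment formula over the coordinate polynomials $p=x_1,\ldots,x_n$ then yields
\[
\E_{\mathbb Q}\big[\mathbf X(T)\mid\mathcal F_t\big]=\mathcal A(T-t)\,\mathbf X(t)+\mathbf a(T-t),
\]
where, reading off the blocks of $\e^{(T-t)G}$, the matrix $\mathcal A(T-t)$ equals $\e^{(T-t)B}$ and $\mathbf a(T-t)$ is the intercept built from $\mathbf v$ and $\int_0^{T-t}\e^{uB}\,du$ (each possibly transposed according to the chosen pairing between polynomials and their coordinate vectors). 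Finally, the homogeneity $\mathcal A(t,T)=\mathcal A(T-t)$ and $\mathbf a(t,T)=\mathbf a(T-t)$ is inherited directly from the time-homogeneity of the polynomial process: the representing matrix $G$ does not depend on time, so $\e^{(T-t)G}$ depends on $(t,T)$ only through $T-t$.
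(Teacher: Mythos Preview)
Your argument is correct and follows essentially the same route as the paper. Both proofs exploit that the generator $\mathcal G$ maps $\mathrm{Pol}_1$ to itself, represent this restriction by a matrix, and read off $\mathcal A$ and $\mathbf a$ from the resulting matrix exponential; integrability is in both cases delegated to the moment bounds in \cite{CKRT,FL}. The only cosmetic difference is that the paper writes $\mathcal G\mathbf x=G\mathbf x+\mathbf b$ with $G\in\R^{n\times n}$ and solves the linear integral equation $\E_{\mathbb Q}[\mathbf X(T)\mid\mathcal F_t]=\mathbf X(t)+\int_t^T(G\,\E_{\mathbb Q}[\mathbf X(s)\mid\mathcal F_t]+\mathbf b)\,ds$ directly, whereas you embed into $\R^{n+1}$ and invoke the full moment formula; your $B^\top$ and $\mathbf v$ are the paper's $G$ and $\mathbf b$.
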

\begin{proof}
A polynomial process has finite moments (Lemma~2.17 in Cuchiero, Keller-Ressel and Teichmann~\cite{CKRT}), and thus 
$\mathbf X$ is $\mathbb Q$-integrable. Following the definition of a polynomial process (see e.g. Cuchiero, Keller-Ressel and Teichmann~\cite{CKRT}
and Filipovic and Larsson~\cite{FL}), 
we know that for the generator $\mathcal G$ of $\mathbf X$, there exists a matrix $G\in\R^{n\times n}$ and a vector
$\mathbf b\in\R^n$ such that
$\mathcal G\mathbf x=G\mathbf x+\mathbf b$. This holds true since $\mathbf x$ is a first order polynomial and the generator is
preserving the order when applied to polynomials. Therefore, from the martingale problem of polynomial processes,
$$
\E_{\mathbb Q}[\mathbf X(T)\,|\,\mathcal F_t]
=\mathbf X(t)+\int_t^T \big( G\E_{\mathbb Q}[\mathbf X(s)\,|\,\mathcal F_t]+\mathbf b\big)\,ds.
$$ 
and thus,
$$
\E_{\mathbb Q}[\mathbf X(T)\,|\,\mathcal F_t]=\e^{G(T-t)}\mathbf X(t)+\int_t^T\e^{G(T-s)}\mathbf b\,ds.
$$ 
The result follows. 
\end{proof}
We remark in passing that the class of polynomial processes has a much richer structure than really needed for the $\mathbb Q$-affinity. 
The generator of a polynomial process preserves the order of any polynomial, while $\mathbb Q$-affinity only requires that the generator 
preserves the first order polynomials. 

As an example, consider an Ornstein-Uhlenbeck process in $\mathbb R^n$ with $\mathbb Q$-dynamics 
$$
d\mathbf X(t)=(\boldsymbol{\mu}+C\mathbf X(t))\,dt+\Sigma\,d\mathbf W(t).
$$
Here, $\boldsymbol{\mu}\in\R^n$, $C\in\R^{n\times n}$, $\Sigma\in\R^{n\times m}$ and $\mathbf W$ is an $m$-dimensional Brownian motion. A direct calculation reveals that for $t\leq T$,
$$
\mathbf X(T)=\e^{C(T-t)}\mathbf X(t)+\int_t^T\e^{C(T-s)}\boldsymbol{\mu}\,ds+\int_t^T\e^{C(T-s)}\Sigma\,d\mathbf W(s),
$$
and thus
$$
\E_{\mathbb{Q}}[\mathbf X(T)\,|\,\mathcal F_t]=\e^{C(T-t)}\mathbf X(t)+\int_0^{T-t}\e^{Cs}\boldsymbol{\mu}\,ds.
$$
In conclusion, $\mathbb{Q}$-affinity holds with $\mathcal A(T-t)=\exp(C(T-t))$ and $\mathbf a(T-t)=\int_0^{T-t}\exp(Cs)\boldsymbol{\mu}\,ds$. Note that both $\mathcal A$ 
and $\mathbf a$ are homogeneous in time. Whenever $C$ is an invertible matrix, we find
$$
\mathbf a(T-t)=C^{-1}(\e^{C(T-t)}-I)\boldsymbol{\mu}
$$ 
where $I\in\R^{n\times n}$ is the identity matrix. 

We have the following simple result:
\begin{corollary}
If $\{\mathbf X(t)\}_{t\geq 0}$ is $\mathbb Q$-integrable and $\mathbb Q$-affine process in $\R^n$, then 
$$
\mathbf F(t,T)=\mathcal P\mathcal A(t,T)\mathbf X(t)+\mathcal P \mathbf a(t,T)\,.
$$
Moreover, if $\mathcal P\mathcal A(t,T)=\widetilde{\mathcal A}(t,T)\mathcal P$ for some $\widetilde{\mathcal A}(t,T)\in\R^{d\times d}$, $0\leq t\leq T<\infty$, then $\mathbf F(t,T)=\widetilde{\mathcal A}(t,T)\mathbf S(t)+\mathcal P\mathbf a(t,T)$ (i.e., the forward price vector is affine in the asset price $\mathbf S$.) 
\end{corollary}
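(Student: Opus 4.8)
The plan is to reduce everything to substitution, since the corollary is an immediate consequence of the definition of the forward price together with the $\mathbb Q$-affine property. First I would recall that by construction $\mathbf F(t,T)=\mathbb E_{\mathbb Q}[\mathbf S(T)\,|\,\mathcal F_t]$ and that $\mathbf S(T)=\mathcal P\mathbf X(T)$. Because $\mathcal P\in\R^{d\times n}$ is a deterministic constant matrix, I can pull it outside the conditional expectation by linearity, obtaining $\mathbf F(t,T)=\mathcal P\,\mathbb E_{\mathbb Q}[\mathbf X(T)\,|\,\mathcal F_t]$; the $\mathbb Q$-integrability of $\mathbf X$ is exactly what guarantees this conditional expectation is well defined.

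Next I would invoke the assumption that $\mathbf X$ is $\mathbb Q$-affine, which by definition gives $\mathbb E_{\mathbb Q}[\mathbf X(T)\,|\,\mathcal F_t]=\mathcal A(t,T)\mathbf X(t)+\mathbf a(t,T)$. Substituting this into the previous display and distributing $\mathcal P$ over the sum yields the first claimed identity $\mathbf F(t,T)=\mathcal P\mathcal A(t,T)\mathbf X(t)+\mathcal P\mathbf a(t,T)$.

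For the ``moreover'' part I would use the intertwining hypothesis $\mathcal P\mathcal A(t,T)=\widetilde{\mathcal A}(t,T)\mathcal P$ to rewrite the leading term, so that $\mathcal P\mathcal A(t,T)\mathbf X(t)=\widetilde{\mathcal A}(t,T)\mathcal P\mathbf X(t)=\widetilde{\mathcal A}(t,T)\mathbf S(t)$, using once more that $\mathbf S(t)=\mathcal P\mathbf X(t)$. Combining this with the unchanged term $\mathcal P\mathbf a(t,T)$ gives $\mathbf F(t,T)=\widetilde{\mathcal A}(t,T)\mathbf S(t)+\mathcal P\mathbf a(t,T)$, as desired.

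I do not anticipate any genuine obstacle: the entire argument is linearity of conditional expectation plus two substitutions. The only point requiring a moment's care is the justification for moving the constant matrix $\mathcal P$ through $\mathbb E_{\mathbb Q}[\cdot\,|\,\mathcal F_t]$ coordinatewise, which is legitimate precisely because $\mathbf X(T)$ is $\mathbb Q$-integrable and $\mathcal P$ has deterministic entries. It is worth noting in passing that the intertwining condition $\mathcal P\mathcal A=\widetilde{\mathcal A}\mathcal P$ need not hold for an arbitrary pricing matrix; its availability is what makes the forward prices expressible directly in terms of the observable spot prices $\mathbf S$ rather than the latent factors $\mathbf X$.
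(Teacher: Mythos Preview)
Your proof is correct and follows exactly the paper's approach, which simply states ``This is trivial from the definition of affinity.'' You have merely spelled out in detail what the paper leaves implicit.
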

\begin{proof}
This is trivial from the definition of affinity. 
\end{proof}
We note that in the case $d=n$, we have forward prices which are affine in the underlying spot when
$\mathcal P$ and $\mathcal A(t,T)$ commutes for all $t\leq T$.

Let us next turn to the question of cointegration in the forward market. As $t\leq T<\infty$, it is natural to switch to the Musiela parametrization, and express forward prices in terms of {\it time to maturity} $x:=T-t$ rather than {\it time of maturity} $T$. I.e., introduce the random fields $\mathbf f(t,x)$ 
for $x\geq 0$ by
\begin{equation}
\mathbf f(t,x):=\mathbf F(t,t+x)\,.
\end{equation}
Hence, we find in the case of $\{\mathbf X(t)\}_{t\geq 0}$ being $\mathbb Q$-affine and $\mathbb Q$-integrable that
$$
\mathbf f(t,x)=\mathcal P\mathcal A(t,t+x)\mathbf X(t)+\mathbf a(t,t+x)\,.
$$
The following Proposition holds:
\begin{proposition}
\label{prop:coint-futures}
Fix $x\geq 0$, and suppose that $\mathbf X$ is $\mathbb Q$-integrable and $\mathbb Q$-affine, with $\mathcal A$ and 
$\mathbf a$ homogeneous (e.g., $\mathcal A(t,T)=\mathcal A(T-t)$ and $\mathbf a(t,T)=\mathbf a(T-t)$). Then 
$t\mapsto \mathbf f(t,x)$ is cointegrated if
there exists a vector $\mathbf c\in\R^d$ such that $\mathbf c^{\top}\mathcal P\mathcal A(x)\in\mathcal C_{\mathbf X}$, or, equivalently, 
$(\mathcal P\mathcal A(x),\mathbf c)$ is
a cointegrated pricing system.
\end{proposition}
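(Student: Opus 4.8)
The plan is to exploit the homogeneity hypothesis to freeze the maturity-dependence of the affine coefficients, which reduces the claim to a direct application of the cumulant characterisation in Proposition~\ref{prop:characterisation}. Fixing $x\geq 0$ and using $\mathcal A(t,t+x)=\mathcal A((t+x)-t)=\mathcal A(x)$ together with the analogous homogeneity $\mathbf a(t,t+x)=\mathbf a(x)$, the forward field in the Musiela parametrisation becomes
$$
\mathbf f(t,x)=\mathcal P\mathcal A(x)\mathbf X(t)+\mathbf a(x),
$$
where, for the fixed $x$, the vector $\mathbf a(x)$ is deterministic and independent of $t$. The key observation is that $\mathcal P\mathcal A(x)$ plays the role of a new constant pricing matrix acting on the factor process $\mathbf X$, so that cointegration of $t\mapsto\mathbf f(t,x)$ is governed entirely by the limiting behaviour of linear functionals of $\mathbf X(t)$.

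Next I would form the candidate cointegrated combination. For $\mathbf c\in\R^d$ one has
$$
\mathbf c^{\top}\mathbf f(t,x)=\big(\mathbf c^{\top}\mathcal P\mathcal A(x)\big)\mathbf X(t)+\mathbf c^{\top}\mathbf a(x)=\big(\mathcal A(x)^{\top}\mathcal P^{\top}\mathbf c\big)^{\top}\mathbf X(t)+\gamma,
$$
where $\gamma:=\mathbf c^{\top}\mathbf a(x)\in\R$ is a deterministic constant. The hypothesis $\mathbf c^{\top}\mathcal P\mathcal A(x)\in\mathcal C_{\mathbf X}$ says precisely that $\mathcal A(x)^{\top}\mathcal P^{\top}\mathbf c$ is a cointegration vector for $\mathbf X$, i.e.\ that $\big(\mathcal A(x)^{\top}\mathcal P^{\top}\mathbf c\big)^{\top}\mathbf X(t)$ admits a limiting distribution as $t\to\infty$. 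It then remains only to transport this limit through the deterministic shift $\gamma$.

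To make this rigorous with the tools already established, I would pass to characteristic functions and invoke the converse half of Proposition~\ref{prop:characterisation}. Writing $\mathbf a':=\mathcal A(x)^{\top}\mathcal P^{\top}\mathbf c$, for every $z\in\R$ the independence of $\gamma$ from the randomness gives
$$
\Psi_{\mathbf c^{\top}\mathbf f(\cdot,x)}(t,z)=\e^{\mathrm{i}z\gamma}\,\Psi_{\mathbf X}(t,z\,\mathbf a'),
$$
and since $\mathbf a'\in\mathcal C_{\mathbf X}$ the factor $\Psi_{\mathbf X}(t,z\,\mathbf a')$ converges pointwise to the characteristic function $\Psi_{\mu}(z)$ of the limiting law $\mu$ of $\mathbf a'^{\top}\mathbf X(t)$. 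As the phase $\e^{\mathrm{i}z\gamma}$ is independent of $t$ and continuous in $z$, the product converges for every $z$ to $z\mapsto\e^{\mathrm{i}z\gamma}\Psi_{\mu}(z)$, which is continuous at $z=0$. Hence the converse direction of Proposition~\ref{prop:characterisation} yields that $t\mapsto\mathbf f(t,x)$ is cointegrated with cointegration vector $\mathbf c$, the limiting law being $\mu$ translated by $\gamma$. Finally, the stated equivalence is immediate: for fixed $x$, $\mathcal P\mathcal A(x)$ is a constant matrix in $\R^{d\times n}$, and $\mathbf c^{\top}\mathcal P\mathcal A(x)\in\mathcal C_{\mathbf X}$ is exactly the defining requirement $(\mathcal P\mathcal A(x))^{\top}\mathbf c\in\mathcal C_{\mathbf X}$ for $(\mathcal P\mathcal A(x),\mathbf c)$ to be a cointegrated pricing system for $\mathbf X$.

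I do not expect any serious obstacle: the result is essentially bookkeeping once homogeneity is used to strip the $t$-dependence from $\mathcal A$ and $\mathbf a$. The only points that require a little care are (i) observing that, for a \emph{fixed} time to maturity $x$, the affine drift contributes merely a constant translation, which preserves convergence in distribution, and (ii) confirming that $\mathcal P\mathcal A(x)$ may be treated as a constant pricing matrix, so that membership of $\mathcal A(x)^{\top}\mathcal P^{\top}\mathbf c$ in $\mathcal C_{\mathbf X}$ is the correct and only hypothesis needed. Should one wish to avoid characteristic functions altogether, the continuous mapping theorem applied to the translation $y\mapsto y+\gamma$ delivers the same conclusion directly.
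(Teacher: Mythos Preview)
Your proof is correct and follows essentially the same route as the paper: use homogeneity to freeze $\mathcal A(t,t+x)=\mathcal A(x)$ and $\mathbf a(t,t+x)=\mathbf a(x)$, so that $\mathbf f(t,x)$ is an affine function of $\mathbf X(t)$ with $t$-independent coefficients, and then read off cointegration from the definitions. The paper's own proof is a one-liner (``follows readily from the definitions''); your version is simply a more explicit unpacking via Proposition~\ref{prop:characterisation}, which is entirely in the same spirit.
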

\begin{proof}
This follows readily from the definitions and the fact that for homogeneous $\mathcal A$ and $\mathbf a$, 
$\mathcal A(t,t+x)=\mathcal A(x)$ and $\mathbf a(t,t+x)=\mathbf a(x)$. 
\end{proof}
\begin{remark}
We emphasise that $x\geq 0$ is fixed in Proposition~\ref{prop:coint-futures}. This means that it is the dynamics of
the forward contracts with fixed time to maturity that is cointegrated. This can be viewed as a roll-over contract, where
one fixes the time to maturity and "rolls over" the position when time progresses. The actual forward price dynamics
will in general {\it not} be cointegrated as it will depend on $\mathcal A(t,T)$ and $\mathbf a(t,T)$, which varies with time $t$. Benth and Koekebakker~\cite{BK} make a similar observation for a more
particular HJM-type cointegrated forward price model.   
If $x=0$, or equivalently $t=T$, we are back to the spot price case. Propositions~\ref{prop:coint-futures} and 
\ref{prop:polynomial} show that polynomial processes can be used to build cointegrated forward price models. 
\end{remark}

Consider the case when the $\mathbb{Q}$-dynamics of $\mathbf X(t)\in\R^3$ is such that $X_3(t)$ is a non-stationary process 
and $(X_1,X_2)$ admits a limiting distribution. From previous considerations we then have that
$\mathcal C_{\mathbf X}=\{\mathbf a\in\R^3\,|\,a_3=0\}$. In the context of Example~\ref{motivating-ex}, for any 
pricing matrix $\mathcal P\in\R^{2\times 3}$ and $\mathbf c\in\R^2$, we find that
$\mathbf c^{\top}\mathcal P\in\mathcal C_{\mathbf X}$ if and only if $\mathbf c^{\top}\mathcal P\mathbf e_3=0$ (e.g, the third coordinate of
$\mathbf c^{\top}\mathcal P$ is equal to zero). With $p_{ij}$ denoting the $ij$th element of $\mathcal P$, we find that 
$\mathbf c^{\top}\mathcal P\in\mathcal C_{\mathbf X}$ if and only if $c_1p_{13}+c_2p_{23}=0$. Let us analyse this for non-trivial
$\mathbf c$ (e.g., $\mathbf c\neq\mathbf 0$) and $\mathcal P$ being minimal. Minimality of $\mathcal P$ means that 
$\mathcal P\mathbf e_i\neq(0,0)^{\top}$ for $i=1,2,3$, and in particular for $i=3$ we find $(p_{13},p_{23})\neq (0,0)$. 
Thus, we find that $(\mathcal P,\mathbf c)$ is a cointegrated pricing system if and only if either $c_2,p_{13}\neq 0$ and
$c_1/c_2=-p_{23}/p_{13}$ or $c_1,p_{23}\neq 0$ and $c_2/c_1=-p_{13}/p_{23}$. 
If $\mathbf X$ is $\mathbb Q$-affine with a matrix $\mathcal A(t,T)=\mathcal A(T-t)\in\R^{3\times 3}$ satisfying 
$\mathbf e_1^ {\top}\mathcal A(x)\mathbf e_3=\mathbf e_2^{\top}\mathcal A(x)\mathbf e_3=0$ yields
that $\mathbf c^{\top}\mathcal P\mathcal A(x)\in\mathcal C_{\mathbf X}$ for any cointegrated pricing system 
$(\mathcal P,\mathbf c)$. 

As a particular case of the above, consider the factor process
\begin{equation}
\label{eq:OU-example}
d\mathbf X(t)=\left(\boldsymbol \mu+\left[\begin{array}{cc} C & \mathbf{0} \\ \mathbf{0}^{\top} & 0\end{array}\right]\mathbf X(t)\right)\,dt+\Sigma\,d\mathbf W(t)
\end{equation}
with $\Sigma,C\in\R^{2\times2}$, $\boldsymbol\mu\in\R^3$ and $\mathbf{0}=(0,0)^{\top}$. Further, 
$\mathbf W$ is assumed to 
be a trivariate $\mathbb Q$-Brownian motion.
Here, $(X_1,X_2)$ will be a bivariate OU process
with mean-reversion matrix $C$ and noise vector $(\mathbf e_1^{\top}\Sigma\,d\mathbf W(t),\mathbf e_2^{\top}\Sigma\,d\mathbf W(t))^{\top}$,
which admits a limiting distribution whenever $C$ has eigenvalues with negative real part.  
The process $X_3$ is a drifted Brownian motion. Then,
$$
\E_{\mathbb Q}[\mathbf X(T)\,|\,\mathcal F_t]=\mathcal A(T-t)\mathbf X(t)+\mathbf a(T-t),
$$ 
where
\begin{equation}
\label{semigroup-ou-example}
\mathcal A(T-t)=\left[\begin{array}{cc} e^{C(T-t)} & \mathbf{0} \\ \mathbf{0}' & 1 \end{array}\right],
\end{equation}
and $\mathbf a(T-t)=\int_0^{T-t}\mathcal A(y)\boldsymbol \mu\,dy$.
Thus, for $(\mathcal P,\mathbf c)$ being a cointegrated pricing system, the forward prices will also be cointegrated.
We see that we obtain cointegration both for the spot (under $\mathbb Q$) and the forward prices
for rather general models of $\mathbf X$, including a full corrrelation structure between the three noises
$\mathbf W$ and flexible mean reversion matrix $C$.

Note that if $\mathcal A$ is not homogeneous, that is, $\mathbf X$ is $\mathbb Q$-affine for a non-homogeneous
$\mathcal A$, then we may lose cointegration in the forward process. In the case $\mathbf a$ is non-homogeneous, 
we may recover cointegration as long as $\mathcal A$ is homogeneous by considering the "de-trended" forward price vector
$\bar{\mathbf f}(t,x):=\mathbf f(t,x)-\mathcal P\mathbf a(t,x)$. In 
that case, $\bar{\mathbf f}(t,x)$ is cointegrated whenever $\mathbf c^{\top}\mathcal P\mathcal A(x)\in\mathcal C_{\mathbf X}$. Indeed, this is a relevant case for commodity markets with seasonally varying prices. For example, in power markets, where
prices are highly influenced by weather conditions, it may appear that $\mathbf a$ is not homogeneous. Indeed, the factor model
\eqref{eq:OU-example} can be used as a model for spot prices with $\boldsymbol{\mu}$ being time dependent, i.e.
$t\mapsto\boldsymbol{\mu}(t)$ for some measurable real-valued function being bounded on compacts. Then 
$\mathbf a(t,T)=\int_t^{T}\mathcal A(T-s)\boldsymbol{\mu}(s)\,ds$ is not in general homogeneous. Typically, $\boldsymbol{\mu}(t)$ models a seasonal mean price, towards which the stationary part of $\mathbf X$ mean reverts (see e.g. Benth, \v{S}altyte Benth and Koekebakker~\cite{BSBK} for models of this type with seasonality).  

\subsection{General LS-processes}
In general, LS-process will not be $\mathbb Q$-affine. In this subsection we analyse forward pricing involving
LS-processes. 

For $x\geq 0$, we define the $\R^m$-valued random field $\widetilde{\mathbf X}^m(t,x)$ by
\begin{equation}
\widetilde{\mathbf X}^m(t,x):=\int_{-\infty}^tG(t-s+x)\,d\mathbf L(s),
\end{equation}
with $G$ and $\mathbf L$ being as in the definition of the LS-process in \eqref{def:ls}. We assume that this is
the $\mathbb Q$-dynamics of $\widetilde{\mathbf X}^m(t,x)$. In particular, for
$x=0$, we are back to $\mathbf X^m(t)$ as in \eqref{def:ls} (but now considered as a dynamics
with respect to $\mathbb Q$). Moreover, following the proof of 
Lemma~\ref{lemma:strict-stat-ls}, the stochastic process $t\mapsto \widetilde{\mathbf X}^m(t,x)$ is strictly stationary for every
$x\geq 0$. It is simple to see that
\begin{equation}
\E_{\mathbb Q}[\mathbf X^m(T)\,|\,\mathcal F_t]=\E_{\mathbb Q}[\widetilde{\mathbf X}^m(T,0)\,|\,\mathcal F_t]=\widetilde{\mathbf X}^m(t,T-t),
\end{equation}
by appealing to the independent increment property of L\'evy processes. Hence, assuming a factor process
$\mathbf X$ which is $\mathbb Q$-integrable, where $\mathbf X^m$ is given by an LS-process as in \eqref{def:ls} with respect 
to the probability $\mathbb Q$, we find that
\begin{equation}
\mathbf f(t,x)=\mathcal P\left(\widetilde{X}_1(t,x),\ldots,\widetilde{X}_m(t,x),\E_{\mathbb Q}[X_{m+1}(T)\,|\,\mathcal F_t],\ldots,\E_{\mathbb Q}[X_{n}(T)\,|\,\mathcal F_t]\right)^{\top}.
\end{equation}
We see that any $\mathbf c\in\R^d$ such that $\mathbf c^{\top}\mathcal P\in\mathcal C^m_{\mathbf X}$ implies that 
$\mathbf c^{\top}\mathbf f(t,x)$ becomes a linear combination of $\widetilde{X}_1(t,x),\ldots,\widetilde{X}_m(t,x)$, and therefore strictly stationary.
Hence, $\mathbf c$ will be a cointegration vector for $\mathbf f(t,x)$.

The classes of CARMA-processes and their multivariate extensions discussed in the previous section provide a rich class of LS-processes that can be used for modelling cointegrated forward prices under the Musiela parametrization.

\subsection{Factor models of geometric type}

Classically, pricing models in finance have been geometric. In our context, we recall from \eqref{def:geom-factor-model} that this means a spot price dynamics $\mathbf S$ of the form $\ln\mathbf S(t)=\mathcal P\mathbf X(t)$. The forward price vector
$\mathbf F(t,T)=(F_1(t,T),\ldots,F_d(t,T))^{\top}$ will be given by
\begin{equation}
\label{def:exp-forward}
F_i(t,T)=\E_{\mathbb Q}\left[\exp(\mathbf e_i^{\top}\mathcal P\mathbf X(T))\,|\,\mathcal F_t\right]
\end{equation}
for $t\leq T$ and $i=1,\ldots,d$. We recall that $\{\mathbf e_i\}_{i=1}^d$ are the canonical basis vectors in $\R^d$, thus $\mathbf e_i^{\top}\mathcal P\mathbf X(T)$ is the $i$th coordinate of the vector $\mathcal P\mathbf X(t)$, i.e., $\ln S_i(t)$. 
We are naturally led to define the following class of factor processes:
\begin{definition}
A process $\mathbf X$ is called exponentially $\mathbb Q$-affine if for every $\mathbf z\in\R^n$, $\mathbf z^{\top}\mathbf X(T)$
has finite exponential moment under $\mathbb Q$ and
there exist measurable mappings $(t,T)\mapsto\boldsymbol\alpha(t,T;\mathbf z)\in\R^{n}$ and 
$(t,T)\mapsto a(t,T;\mathbf z)\in\R$ such that 
$$
\E_{\mathbb Q}[\exp(\mathbf z^{\top}\mathbf X(T))\,|\,\mathcal F_t]=\exp\left(\boldsymbol\alpha(t,T;\mathbf z)^{\top}\mathbf X(t)+a(t,T;\mathbf z)\right)
$$
for all $t\leq T$.
\end{definition} 

For exponential $\mathbb Q$-affine factor processes, we have:
\begin{proposition}
If $\mathbf X$ is an $n$-dimensional exponential $\mathbb Q$-affine factor process, then $\mathbf F(t,T), t\leq T$ has coordinates
$$
F_i(t,T)=\exp\left(\boldsymbol\alpha(t,T;\mathcal P^{\top}\mathbf e_i)^{\top}\mathbf X(t)+a(t,T;\mathcal P^{\top}\mathbf e_i)\right)
$$
for $i=1,\ldots,d$.
\end{proposition}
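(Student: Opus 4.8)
The claim is an immediate application of the definition of exponential $\mathbb{Q}$-affinity to the specific vectors appearing in the geometric forward price formula. The plan is to start from the defining relation \eqref{def:exp-forward}, namely
$$
F_i(t,T)=\E_{\mathbb Q}\left[\exp(\mathbf e_i^{\top}\mathcal P\mathbf X(T))\,|\,\mathcal F_t\right],
$$
and recognise that the exponent $\mathbf e_i^{\top}\mathcal P\mathbf X(T)$ is of the form $\mathbf z^{\top}\mathbf X(T)$ with the particular choice $\mathbf z=\mathcal P^{\top}\mathbf e_i\in\R^n$. This identification is just the transpose rule $\mathbf e_i^{\top}\mathcal P=(\mathcal P^{\top}\mathbf e_i)^{\top}$, so $\mathbf e_i^{\top}\mathcal P\mathbf X(T)=(\mathcal P^{\top}\mathbf e_i)^{\top}\mathbf X(T)$.

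First I would invoke the exponential $\mathbb{Q}$-affinity of $\mathbf X$ with $\mathbf z:=\mathcal P^{\top}\mathbf e_i$. The definition then directly supplies
$$
\E_{\mathbb Q}[\exp((\mathcal P^{\top}\mathbf e_i)^{\top}\mathbf X(T))\,|\,\mathcal F_t]=\exp\left(\boldsymbol\alpha(t,T;\mathcal P^{\top}\mathbf e_i)^{\top}\mathbf X(t)+a(t,T;\mathcal P^{\top}\mathbf e_i)\right),
$$
which is exactly the asserted expression for $F_i(t,T)$. The only thing to check before applying the definition is that $\mathbf z^{\top}\mathbf X(T)$ has a finite exponential moment under $\mathbb{Q}$ for this $\mathbf z$; but the definition of exponential $\mathbb{Q}$-affinity requires the representation to hold for \emph{every} $\mathbf z\in\R^n$, hence in particular for each of the finitely many vectors $\mathcal P^{\top}\mathbf e_i$, $i=1,\ldots,d$, so the integrability is automatic.

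The argument is genuinely a one-line substitution, so there is no real obstacle; the only point requiring minor care is the bookkeeping that $\mathcal P^{\top}\mathbf e_i$ is indeed a vector in $\R^n$ (since $\mathcal P\in\R^{d\times n}$ gives $\mathcal P^{\top}\in\R^{n\times d}$ and $\mathbf e_i\in\R^d$), matching the domain of $\boldsymbol\alpha$ and $a$. Repeating this for each coordinate $i=1,\ldots,d$ yields the full vector $\mathbf F(t,T)$, and the result follows.
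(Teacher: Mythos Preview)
Your proposal is correct and matches the paper's own proof, which simply states that the result follows immediately from the definition of exponential $\mathbb{Q}$-affinity and the forward price formula \eqref{def:exp-forward}. Your explicit identification $\mathbf z=\mathcal P^{\top}\mathbf e_i$ is exactly the substitution the paper has in mind.
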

\begin{proof}
This follows immediately from the definition of exponential affinity and \eqref{def:exp-forward}. 
\end{proof}
For example, if $\mathbf X$ is given by \eqref{eq:OU-example} (under $\mathbb Q$), it will be exponentially $\mathbb Q$-affine, 
as the following Lemma shows:
\begin{lemma}
Suppose that $\mathbf X$ is the factor process in $\R^3$ defined in \eqref{eq:OU-example}. Then $\mathbf X$ is
exponential $\mathbb Q$-affine, with $\boldsymbol\alpha(t,T;\mathbf z)=\mathcal A(T-t)^{\top}\mathbf z$ and
$$
a(T-t;\mathbf z)=\int_0^{T-t}\mathbf z^{\top}\mathcal A(s)\boldsymbol\mu+\mathbf z^{\top}\mathcal A(s)\Sigma C\Sigma^{\top}\mathcal A(s)^{\top}\mathbf z\,ds.
$$
Here, $\mathcal A$ is defined in \eqref{semigroup-ou-example} and $C$ is the $3\times 3$ covariance matrix
of $\mathbf W$. 
\end{lemma}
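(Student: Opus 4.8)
The plan is to integrate the linear SDE \eqref{eq:OU-example} explicitly and read off the conditional exponential moment directly. Writing $\widetilde C$ for the $3\times 3$ block drift matrix appearing in \eqref{eq:OU-example}, variation of constants gives for $t\leq T$
\begin{equation*}
\mathbf X(T)=\mathcal A(T-t)\mathbf X(t)+\int_t^T\mathcal A(T-s)\boldsymbol\mu\,ds+\int_t^T\mathcal A(T-s)\Sigma\,d\mathbf W(s),
\end{equation*}
where $\mathcal A(u)=\e^{\widetilde C u}$ is precisely the matrix in \eqref{semigroup-ou-example}, the block structure of $\widetilde C$ being preserved under exponentiation. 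Fixing $\mathbf z\in\R^3$, I would split $\mathbf z^\top\mathbf X(T)$ into the $\mathcal F_t$-measurable term $(\mathcal A(T-t)^\top\mathbf z)^\top\mathbf X(t)$, the deterministic term $\mathbf z^\top\int_t^T\mathcal A(T-s)\boldsymbol\mu\,ds$, and the Wiener integral $I:=\int_t^T\mathbf z^\top\mathcal A(T-s)\Sigma\,d\mathbf W(s)$.

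The crux is that, by the independent-increment property of Brownian motion, $I$ is independent of $\mathcal F_t$ and is a centered real Gaussian variable, whose variance the It\^o isometry for the correlated driver $\mathbf W$ evaluates as $\int_t^T\mathbf z^\top\mathcal A(T-s)\Sigma\,C\,\Sigma^\top\mathcal A(T-s)^\top\mathbf z\,ds$, with $C$ the instantaneous covariance matrix of $\mathbf W$. Factoring the $\mathcal F_t$-measurable part out of the conditional expectation and applying the Gaussian identity $\E[\e^I\mid\mathcal F_t]=\e^{\frac12\mathrm{Var}(I)}$ then produces an expression of the exponential-affine form, from which $\boldsymbol\alpha(t,T;\mathbf z)=\mathcal A(T-t)^\top\mathbf z$ can be read off immediately.

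Finally, a substitution $u=T-s$ in the two remaining integrals recasts them over $[0,T-t]$, which both exhibits homogeneity and yields $a(T-t;\mathbf z)=\int_0^{T-t}\big(\mathbf z^\top\mathcal A(u)\boldsymbol\mu+\tfrac12\mathbf z^\top\mathcal A(u)\Sigma C\Sigma^\top\mathcal A(u)^\top\mathbf z\big)\,du$, matching the stated formula with the quadratic term carrying the factor $\tfrac12$ characteristic of the Gaussian cumulant. The finite-exponential-moment requirement in the definition is automatic, since the conditional law of $\mathbf z^\top\mathbf X(T)$ is Gaussian and hence possesses exponential moments of all orders. There is no deep obstacle here: the only steps demanding care are the correct evaluation of $\mathrm{Var}(I)$ through the It\^o isometry with a correlated $\mathbf W$ (which is what brings in the matrix $\Sigma C\Sigma^\top$) and the change of variables that makes homogeneity manifest.
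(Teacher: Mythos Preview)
Your argument is essentially identical to the paper's own proof: both write the explicit solution $\mathbf X(T)=\mathcal A(T-t)\mathbf X(t)+\int_t^T\mathcal A(T-s)\boldsymbol\mu\,ds+\int_t^T\mathcal A(T-s)\Sigma\,d\mathbf W(s)$, use the independent-increment property of $\mathbf W$ to factor the conditional expectation, and evaluate the remaining Wiener integral via its Gaussian moment generating function. You are also right to flag the factor $\tfrac12$ in front of the quadratic term; the paper's statement and proof omit it, but the Gaussian identity $\E[\e^I]=\e^{\frac12\mathrm{Var}(I)}$ indeed produces $\tfrac12\mathbf z^{\top}\mathcal A(u)\Sigma C\Sigma^{\top}\mathcal A(u)^{\top}\mathbf z$, so your version of $a(T-t;\mathbf z)$ is the correct one.
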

\begin{proof}
It holds that
$$
\mathbf X(T)=\mathcal A(T-t)\mathbf X(t)+\int_0^{T-t}\mathcal A(s)\boldsymbol\mu\,ds+\int_t^T\mathcal A(T-s)\Sigma\,d\mathbf W(s),
$$
where $\mathcal A(s)$ is defined in \eqref{semigroup-ou-example}. As the stochastic integral on the right hand side is a Wiener
integral, it is a Gaussian random variable and hence $\mathbf z^{\top}\mathbf X(t)$ has finite exponential moment for every
$\mathbf z\in\R^n$. By the independent increment property of Brownian motion and the $\mathbf X(t)$ being
$\mathcal F_t$-measurable, we find
\begin{align*}
\E_{\mathbb Q}\left[\exp(\mathbf z^{\top}\mathbf X(T))\,|\,\mathcal F_t\right]&=\exp\left(\mathbf z^{\top}\mathcal A(T-t)\mathbf X(t)+\int_0^{T-t}\mathbf z^{\top}\mathcal A(s)\boldsymbol\mu\,ds\right) \\
&\qquad\qquad\times\E_{\mathbb Q}\left[\exp\left(\int_t^T\mathbf z^{\top}\mathcal A(T-s)\Sigma\,d\mathbf W(s)\right)\right] \\
&=\exp\left(\mathbf z^{\top}\mathcal A(T-t)\mathbf X(t)+\int_0^{T-t}\mathbf z^{\top}\mathcal A(s)\boldsymbol\mu\,ds\right) \\
&\qquad\qquad\times\exp\left(\int_t^T\mathbf z^{\top}\mathcal A(T-s)\Sigma C\Sigma^{\top}\mathcal A(T-s)^{\top}\mathbf z\,ds\right).
\end{align*}
The result follows.
\end{proof}
We remark in passing that one can easily extend the above Lemma to higher dimensions than 3. Observe that both $\boldsymbol\alpha$ and $a$ are homogeneous, i.e., depending only on the time to maturity $T-t$. Let $(\mathcal P,\mathbf c)$ for
$\mathbf c\in\R^2$ and $\mathcal P\in R^{2\times3}$ be a cointegrated pricing system (under $\mathbb Q$), i.e., $\mathbf c^{\top}\mathcal P\in\mathcal C_{\mathbf X}$. We have then
$$
\ln f_i(t,x)=\mathbf e_i^{\top}\mathcal P\mathcal A(x)\mathbf X(t)+a(x;\mathcal P^{\top}\mathbf e_i)
$$
for $i=1,2$. Moreover, as we have seen earlier, $\mathbf c^{\top}\mathcal P\mathcal A(x)\in\mathcal C_{\mathbf X}$, and therefore
the logarithmic forward prices $f_1(t,x)$ and $f_2(t,x)$ are cointegrated for the cointegration vector $\mathbf c$. 
 
Next, let us focus on general LS-processes as factors in a geometric model. Suppose that $\mathbf X^m$ has $\mathbb Q$-dynamics
defined as in \eqref{def:ls}. We find the following:
\begin{proposition}
\label{prop:exp-ls-forward}
Assume $\mathbf X^m$ is an $m$-dimensional process with $\mathbb Q$-dynamics as in \eqref{def:ls}. Let $t\leq T$. If $\mathbf z^{\top}\mathbf X^m(t)$ has finite exponential moment under $\mathbb Q$ for $\mathbf z\in\R^m$, then 
\begin{align*}
\E_{\mathbb Q}&\left[\exp\left(\mathbf z^{\top}\int_{-\infty}^TG(T-s)\,d\mathbf L(s)\right)\,|\,\mathcal F_t\right] \\
&=\exp\left(\mathbf z^{\top}\int_{-\infty}^tG(T-s)\,d\mathbf L(s)+\int_0^{T-t}\psi_{\mathbb Q}(G(s)^{\top}\mathbf z)\,ds\right)
\end{align*}
where $\psi_{\mathbb Q}$ is the cumulant of $\mathbf L(1)$ under $\mathbb Q$. 
\end{proposition}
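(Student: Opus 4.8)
The plan is to reduce the conditional Laplace transform to an unconditional one by splitting the integral at the conditioning time $t$, exactly as in the cumulant computation in the proof of Lemma~\ref{lemma:strict-stat-ls}, and then to invoke the exponential-moment analogue of the Lévy-integral cumulant identity. First I would write, for $t\leq T$,
\[
\int_{-\infty}^T G(T-s)\,d\mathbf L(s)=\int_{-\infty}^t G(T-s)\,d\mathbf L(s)+\int_t^T G(T-s)\,d\mathbf L(s).
\]
The first summand is $\mathcal F_t$-measurable, while the second is a deterministic integral against the increments of $\mathbf L$ on $(t,T]$ and is therefore independent of $\mathcal F_t$ by the independent increment property. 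Taking conditional expectations of the exponential and using the take-out-what-is-known property together with independence gives
\[
\E_{\mathbb Q}\!\left[\e^{\mathbf z^\top\int_{-\infty}^T G(T-s)\,d\mathbf L(s)}\,\Big|\,\mathcal F_t\right]
=\e^{\mathbf z^\top\int_{-\infty}^t G(T-s)\,d\mathbf L(s)}\,\E_{\mathbb Q}\!\left[\e^{\mathbf z^\top\int_t^T G(T-s)\,d\mathbf L(s)}\right].
\]

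Next I would evaluate the remaining unconditional expectation. Since $G$ is deterministic and $\mathbf L$ has stationary and independent increments, the cumulant identity for integrals against Lévy processes (the moment-generating analogue of the one used in Lemma~\ref{lemma:strict-stat-ls}) yields
\[
\E_{\mathbb Q}\!\left[\e^{\mathbf z^\top\int_t^T G(T-s)\,d\mathbf L(s)}\right]=\exp\!\left(\int_t^T\psi_{\mathbb Q}\big(G(T-s)^\top\mathbf z\big)\,ds\right),
\]
and the substitution $u=T-s$ transforms the exponent into $\int_0^{T-t}\psi_{\mathbb Q}(G(u)^\top\mathbf z)\,du$. Inserting this into the factorisation above gives precisely the asserted identity.

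The step that requires care is the validity and finiteness of the cumulant identity at the real argument $\mathbf z$, as opposed to the purely imaginary argument appearing in the characteristic-function version of Lemma~\ref{lemma:strict-stat-ls}; this is where the hypothesis enters. By strict stationarity (Lemma~\ref{lemma:strict-stat-ls}) the law of $\mathbf z^\top\mathbf X^m(t)$ does not depend on $t$, and its exponential moment equals $\exp\big(\int_0^\infty\psi_{\mathbb Q}(G(u)^\top\mathbf z)\,du\big)$; hence the assumption that $\mathbf z^\top\mathbf X^m(t)$ has a finite exponential moment under $\mathbb Q$ is equivalent to $\int_0^\infty\psi_{\mathbb Q}(G(u)^\top\mathbf z)\,du<\infty$. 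This guarantees that $\psi_{\mathbb Q}$ is finite at $G(u)^\top\mathbf z$ for almost every $u$, that the partial integral over $[0,T-t]$ converges, and that both factors in the factorisation have finite expectation, so that the conditioning manipulations are legitimate. Consequently I would first record, as a preliminary observation, the exponential-moment formula $\E_{\mathbb Q}[\exp(\mathbf z^\top\int_a^b H(s)\,d\mathbf L(s))]=\exp(\int_a^b\psi_{\mathbb Q}(H(s)^\top\mathbf z)\,ds)$ for deterministic $H$, obtained by extending the characteristic-function identity to the region where the exponential moment is finite, and then apply it on $[t,T]$. I expect this finiteness and analytic-continuation bookkeeping to be the only genuine obstacle; the remaining steps are routine once it is in place.
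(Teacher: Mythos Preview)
Your argument is correct and follows exactly the same route as the paper: split the integral at $t$, pull out the $\mathcal F_t$-measurable factor, use independence of increments to reduce to an unconditional expectation, and apply the L\'evy cumulant identity followed by the substitution $u=T-s$. The paper's proof is in fact terser than yours and does not spell out the finiteness/analytic-continuation bookkeeping you added in your final paragraph, so your version is, if anything, more careful about where the exponential-moment hypothesis is used.
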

\begin{proof}
Since $\int_{-\infty}^tG(T-s)\,d\mathbf L(s)$ is $\mathcal F_t$-measurable, and the L\'evy process $\mathbf L$ has independent increments, it follows that 
\begin{align*}
\E_{\mathbb Q}&\left[\exp\left(\mathbf z^{\top}\int_{-\infty}^TG(T-s)\,d\mathbf L(s)\right)\,|\,\mathcal F_t\right] \\
&=\exp\left(\mathbf z^{\top}\int_{-\infty}^tG(T-s)\,d\mathbf L(s)\right)\E_{\mathbb Q}\left[\exp\left(\int_t^T\mathbf z^{\top}
G(T-s)\,d\mathbf L(s)\right)\right] \\
&=\exp\left(\mathbf z^{\top}\int_{-\infty}^tG(T-s)\,d\mathbf L(s)+\int_t^T\psi_{\mathbb Q}\left(G(T-s)^{\top}
\mathbf z\right)\,ds\right).
\end{align*}
and the result follows. 
\end{proof}
Express the factor process as $\mathbf X=(\mathbf X^m,\widehat{\mathbf X})\in\R^n$, for $\widehat{\mathbf X}$ being
a process in $\R^{n-m}$, $n>m\in\mathbb N$. We further suppose that $\mathbf X^m$ is an LS-process under 
$\mathbb Q$, as in \eqref{def:ls}. Consider a cointegrated pricing system $(\mathcal P,\mathbf c)$, that is,
$\mathcal P^{\top}\mathbf c\in\mathcal C_{\mathbf X}$, and introduce the following representation of the $d\times n$-matrix
$\mathcal P$: let $\mathcal P^m\in \R^{d\times m}$ and $\widehat{\mathcal P}\in\R^{d\times(n-m)}$ be such that 
$\mathcal P=[\mathcal P^m\quad\widehat{\mathcal P}]$. Then for $i=1,\ldots,d$, 
$$
\mathbf e_i^{\top}\mathcal P\mathbf X(T)=\mathbf e_i^{\top}\mathcal P^m\mathbf X^m(T)+\mathbf e_i^{\top}
\widehat{\mathcal P}\widehat{\mathbf X}(T).
$$ 
Assume that $\mathbf e_i^{\top}\mathcal P^m\mathbf X^m(T)$ and $\mathbf e_i^{\top}
\widehat{\mathcal P}\widehat{\mathbf X}(T)$ have finite exponential moment under $\mathbb Q$, and that they are
conditionally independent with respect to $\mathcal F_t$ for all $t\leq T$. Then it holds for $i=1,\ldots,d$ and $t\leq T$ that 
\begin{align*}
F_i(t,T)&=\E_{\mathbb Q}\left[\exp\left(\mathbf e_i^{\top}\mathcal P\mathbf X(T)\right)\,|\,\mathcal F_t\right] \\
&=\E_{\mathbb Q}\left[\exp\left(\mathbf e_i^{\top}\mathcal P^m\mathbf X^m(T)\right)\,|\,\mathcal F_t\right]
\E_{\mathbb Q}\left[\exp\left(\mathbf e_i^{\top}
\widehat{\mathcal P}\widehat{\mathbf X}(T)\right)\,|\,\mathcal F_t\right] \\
&=\exp\left(\mathbf e_i^{\top}\mathcal P^m\int_{-\infty}^tG(T-s)\,d\mathbf L(s)+\int_0^{T-t}\psi_{\mathbb Q}\left(G(s)^{\top}\mathcal P^{m,\top}\mathbf e_i\right)\,ds\right) \\
&\qquad\qquad\times\E_{\mathbb Q}\left[\exp\left(\mathbf e_i^{\top}
\widehat{\mathcal P}\widehat{\mathbf X}(T)\right)\,|\,\mathcal F_t\right] 
\end{align*}
where we used Prop.~\ref{prop:exp-ls-forward} with $\mathbf z=\mathcal P^{m,\top}\mathbf e_i$in the last equality. We find that
\begin{align*}
\ln f_i(t,x)&=\mathbf e_i^{\top}\mathcal P^m\int_{-\infty}^tG(t-s+x)\,d\mathbf L(s)+\int_0^{x}\psi_{\mathbb Q}\left(G(s)^{\top}\mathcal P^{m,\top}\mathbf e_i\right)\,ds \\
&\qquad\qquad+\ln\E_{\mathbb Q}\left[\exp\left(\mathbf e_i^{\top}
\widehat{\mathcal P}\widehat{\mathbf X}(t+x)\right)\,|\,\mathcal F_t\right],
\end{align*}
for $i=1,\ldots,d$ and $x\geq 0$. The last term is nonlinear in the vector $\widehat{\mathbf X}$, and $\mathbf c$ may fail to
be a cointegration vector for $\ln\mathbf f(t,x)$, even in the case when $\mathcal P^{\top}\mathbf c\in\mathcal C^m_{\mathbf X}$.  

However, typically in applications, $\widehat{\mathbf X}=\mathbf U$ for a L\'evy process $\mathbf U$ in $\R^{n-m}$. In the simplest case,
$\mathbf U(t)=\boldsymbol{\mu}t+\Sigma\,d\mathbf W(t)$ for $\boldsymbol\mu\in\R^{n-m}$, $\Sigma$ an $(n-m)\times(n-m)$
volatility matrix and $\mathbf W$ a $\mathbb Q$-Brownian motion in $\R^{n-m}$. Suppose that $\mathbf U$ is independent of
$\mathbf L$. Then it follows that $\mathbf e_i^{\top}\mathcal P^m\mathbf X^m(T)$ and $\mathbf e_i^{\top}
\widehat{\mathcal P}\widehat{\mathbf X}(T)$ are
conditionally independent with respect to $\mathcal F_t$ for all $t\leq T$. Moreover, $\mathbf e_i^{\top}
\widehat{\mathcal P}\widehat{\mathbf X}(T)$ have finite exponential moment under $\mathbb Q$ when $\mathbf U$ is a
drifted Brownian motion as exemplified above. Without any loss of generality, 
we assume that the coordinates $W_i$, $i=1,\ldots n-m$, of $\mathbf W$ are independent. Denoting $\kappa_{\mathbb Q}$
the cumulant function of $\mathbf U$, we find by resorting to the independent increment property of L\'evy processes
that
$$
\E_{\mathbb Q}\left[\exp\left(\mathbf e_i^{\top}
\widehat{\mathcal P}\widehat{\mathbf X}(t+x)\right)\,|\,\mathcal F_t\right]=
\exp\left(\mathbf e_i^{\top}\widehat{\mathcal P}\widehat{\mathbf X}(t)+x\kappa_{\mathbb Q}(\widehat{\mathcal P}^{\top}\mathbf e_i))\right).
$$  
In this case we have that 
\begin{equation}
\ln\mathbf f(t,x)=\mathbf c^{\top}\mathcal P\left(\begin{array}{c}\widetilde{\mathbf X}^m(t,x) \\ \widehat{\mathbf X}(t)\end{array}\right)+
\mathbf h(x)
\end{equation}
with $\mathbf h(x)\in\R^d$ with coordinates
\begin{equation}
h_i(x)=\int_0^{x}\psi_{\mathbb Q}\left(G(s)^{\top}\mathcal P^{m,\top}\mathbf e_i\right)\,ds+x\kappa_{\mathbb Q}(\widehat{\mathcal P}^{\top}\mathbf e_i)).
\end{equation}
After a simple modification of Lemma~\ref{lemma:strict-stat-ls}, we know that $\widetilde{\mathbf X}^m(t,x)$ is a strictly stationary process in $\R^m$. In this case, any $\mathbf c\in\R^d$ such that $\mathcal P^{\top}\mathbf c\in\mathcal C_{\mathbf X}^m$ is a cointegration vector
for $\ln\mathbf f(t,x)$. Thus, the cointegration vector for the spot yields cointegration of the forwards as well.

\subsection{Market probability $\mathbb P$ vs. pricing measure $\mathbb Q$}

Throughout this Section we have assumed a factor process specified directly under the pricing measure $\mathbb Q$ in our analysis of cointegration for forward markets. Indeed, we have supposed a cointegrated spot model {\it under the pricing measure} $\mathbb Q$
rather than under the market probability $\mathbb P$. In practice, the situation is more likely that one has a cointegrated spot
model {\it under the market probability} $\mathbb P$, and introduces a pricing measure $\mathbb Q$ to price forwards on the spot
prices. The next step is to analyse possible cointegration of the forward prices.

A common approach in commodity and energy markets for introducing a pricing measure $\mathbb Q$ is to consider structure preserving
equivalent probabilites (see Benth, \v{S}altyt\.e Benth and Koekebakker~\cite{BSBK}, Benth et al.~\cite{BKMV}, Benth and Koekebakker~\cite{BK}, Eydeland and Wolyniec~\cite{EW}, Geman~\cite{Geman}, Lucia and Schwartz~\cite{LS}, to
mention just a few). By this we mean a probability $\mathbb Q\sim\mathbb P$ that preserves the probabilistic structure of the factor process $\mathbf X$. In commodity markets, one typically chooses the Esscher
and Girsanov transforms as the approach to construct pricing measures, with constant market price of risk (see Benth, \v{S}altyt\.e Benth and Koekebakker \cite{BSBK} for an introduction of the Esscher transform in commodity markets and, e.g., Karatzas and Shreve~\cite{KS} for a general analysis of the Girsanov transform). Roughly speaking, any cointegrated pricing system $(\mathcal P,\mathbf c)$ for $\mathbb P$ will also
become a cointegrated pricing system for $\mathbb Q$ when we use the Esscher and Girsanov transform with constant market price
of risk to the factor process. We emphasise that to apply these transforms, we need to have a factor process driven by L\'evy processes
with finite exponential moments of some order.   

For CARMA processes driven by Brownian motion one can introduce pricing measures that are structure
preserving, where the coefficients $\alpha_i, i=1,\ldots,p$ in the CARMA matrix $A$ in \eqref{eq:car-matrix} is changed 
(see Benth and \v{S}altyt\.e Benth~\cite{BSB}). Restricting to Ornstein-Uhlenbeck processes, one can find a similar
structure preserving pricing measure which slows down the speed of mean reversion, even for processes driven by positive L\'evy processes
(see Benth and Ortiz-Latorre~\cite{BOL}). Thus, we see that for a rich class of CARMA and Ornstein-Uhlenbeck processes, we can 
introduce pricing measures $\mathbb Q$ that preserves stationarity of the factor process $\mathbf X^m$. Combined with an Esscher transform for the remaining $\widetilde{\mathbf X}$, where $\mathbf X=(\mathbf X^m,\widetilde{\mathbf X})$, we see that the essential
probabilistic characteristics for cointegration under $\mathbb P$ is transferred to $\mathbb Q$. In conclusion, we can obtain 
cointegration in the spot market under $\mathbb P$ which is transferred to $\mathbb Q$. In this way, our analysis 
of cointegration in the forward market in this Section can be linked to practice.   

In general, there is no equivalence of cointegrated pricing systems under the market probability $\mathbb P$ and the chosen
pricing measure $\mathbb Q$. For example, considering the measure change in Benth and Ortiz-Latorre~\cite{BOL} which is reducing
the speed of mean reversion of an Ornstein-Uhlenbeck process, we can in fact "kill" the mean reversion, and turn the 
stationary Ornstein-Uhlenbeck dynamics under $\mathbb P$ into a non-stationary dynamics under $\mathbb Q$. Such a situation may also occur in the case of CARMA processes using the measure change suggested in Benth and \v{S}altyt\.e Benth~\cite{BSB}. We see
that we may alter the space of possible cointegration pricing systems when going from $\mathbb P$ to $\mathbb Q$. For example, considering the simple three-factor model by Lucia and Schwartz in Example~\ref{motivating-ex}, by introducing a measure change
as in Benth and Ortiz-Latorre~\cite{BOL} which kills the mean reversion of $X_1$ and $X_2$, we end up with three non-stationary
(indeed, drifted Brownian motions) processes under $\mathbb Q$. In this case, the only cointegrated pricing systems under 
$\mathbb Q$ are those $(\mathcal P,\mathbf c)$ for which $\mathcal P^{\top}\mathbf c=\mathbf 0\in\R^3$. Interestingly, for this example, 
we can price forwards and recover cointegration for the forward prices, as these will be given in terms linear combination of the factor processes, which are cointegrated with respect to $\mathbb P$.   

\section{Cointegration for Hilbert-valued stochastic processes}

We want to generalize the concept of cointegration to 
Hilbert-valued stochastic processes. Recall from above that we defined cointegration in finite dimensions
by the following scheme of linear mappings for a cointegration pricing system $(\mathcal P,\mathbf c)$ and a factor process 
$\mathbf X\in\R^n$:
$$
\mathbf X\in\R^n\stackrel{\mathcal P}{\longrightarrow} \mathbf S(t)\in\R^d\stackrel{\mathbf c}{\longrightarrow} \mathbf c^{\top}\mathbf S(t)\in\R.
$$
I.e., for a cointegration pricing system $(\mathcal P,\mathbf c)$, we use a linear operator $\mathcal P$ to map the factor vector from the 
{\it factor space} $\R^n$ to the {\it price space} $\R^d$, and next the linear operator $\mathbf c$ to map the price vector from the price space to the real line, which we can think of as the {\it cointegration space}. We lift this to Hilbert-valued
stochastic processes:

Let $\mathsf F$, $\mathsf P$ and $\mathsf C$ be three separable Hilbert spaces, denoting the {\it factor}, 
{\it price} and {\it cointegration} space, resp. We denote $\langle\cdot,\cdot\rangle_i$,  the
inner product with associated norm $|\cdot|_i$, for $i=\mathsf F,\mathsf P,\mathsf C$. Assume $\mathcal P\in L(\mathsf F,\mathsf P)$,
which we call the {\it price operator} and $\mathcal C\in L(\mathsf P,\mathsf C)$ the {\it cointegration operator}. 
For $\{X(t)\}_{t\geq 0}$ being an $\mathsf F$-valued predictable process, we
define the price process 
\begin{equation}
\label{eq:price-hilbert}
Y(t)=\mathcal P X(t)\,, t\geq 0
\end{equation}
which becomes a $\mathsf P$-valued predictable process. 
\begin{definition}
We say that $(\mathcal P,\mathcal C)$ is a cointegration pricing system if the $\mathsf C$-valued stochastic process
$\{\mathcal C\mathcal PX(t)\}_{t\geq0}$ admits a limiting distribution. We say that the price process $Y(t)=\mathcal P X(t)$ 
for given $\mathcal P\in L(\mathsf F,\mathsf P)$ is cointegrated if there exists a $\mathcal C\in L(\mathsf P,\mathsf C)$
such that $\mathcal CY(t)$ admits a limiting distribution.
\end{definition}
Obviously, if $Y$ in \eqref{eq:price-hilbert} is cointegrated, then $(\mathcal C,\mathcal P)$ is a cointegration pricing
system for the given cointegration operator $\mathcal C$. 

We recall from infinite dimensional stochastic analysis (see e.g. Peszat and Zabczyk~\cite{PZ}) that the distribution of a $\mathsf C$-valued random variable $Z$ is defined as the image measure $P_Z$ on the Borel sets $\mathcal B(\mathsf C)$ of $\mathsf C$, that is $P_Z(A)=\mathbb P(Z\in A)$ for $A\in\mathcal B(\mathsf C)$. The definition of cointegration demands the existence of a
probability measure $P_{\infty}$ on $\mathcal B(\mathsf C)$ such that $P_{\mathcal C\mathcal PX(t)}\rightarrow P_{\infty}$ when $t\rightarrow\infty$, where the limit is in the sense of probability measures, e.g., for every bounded measurable function
$g:\mathsf C\rightarrow \R$, it holds for $t\rightarrow\infty$
$$
\int_{\mathsf C}g(u)\,P_{\mathcal C\mathcal PX(t)}(du)\rightarrow \int_{\mathsf C}g(u)P_{\infty}(du).
$$  
Denote the cumulant functional of $X$ by $\Psi_X(t,v)\,, v\in \mathsf F$, defined
as
$$
\Psi_X(t,v):=\log\E\left[\e^{\mathrm{i}\langle v,X(t)\rangle_{\mathsf F}}\right],
$$
where $\log$ is the distinguished logarithm (see  e.g. Sato~\cite{Sato}).
We have the following equivalent characterization of cointegration:
\begin{proposition}
\label{prop:equiv-charact-inf}
$Y$ is cointegrated if and only of there exists a $\mathcal C\in L(\mathsf P,\mathsf C)$
and a cumulant function $\Psi_{\mathcal C}$ such that 
$$
\lim_{t\rightarrow\infty}\Psi_X(t,\mathcal P^*\mathcal C^*u)=\Psi_{\mathcal C}(u)
$$
for all $u\in\mathsf C$.
\end{proposition}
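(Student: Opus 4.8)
The plan is to mirror the finite-dimensional argument of Proposition~\ref{prop:characterisation}, replacing transposes by Hilbert-space adjoints and the scalar L\'evy continuity theorem by a continuity theorem for probability measures on a separable Hilbert space. The algebraic heart of the matter is that the characteristic functional of the cointegrated process $Z(t):=\mathcal C\mathcal P X(t)$ is obtained from the cumulant functional of $X$ by precomposition with the adjoint $(\mathcal C\mathcal P)^{*}=\mathcal P^{*}\mathcal C^{*}\in L(\mathsf C,\mathsf F)$. Indeed, for every $u\in\mathsf C$ one has $\langle u,\mathcal C\mathcal P X(t)\rangle_{\mathsf C}=\langle \mathcal P^{*}\mathcal C^{*}u,X(t)\rangle_{\mathsf F}$, so the characteristic functional of the $\mathsf C$-valued variable $Z(t)$ evaluated at $u$ equals $\E[\exp(\mathrm i\langle \mathcal P^{*}\mathcal C^{*}u,X(t)\rangle_{\mathsf F})]=\exp(\Psi_X(t,\mathcal P^{*}\mathcal C^{*}u))$. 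Thus the convergence displayed in the proposition is precisely convergence of the characteristic functionals of $Z(t)$, and the two implications reduce to the two halves of a continuity theorem.

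For the necessity direction I would start from the definition of cointegration: if $Y$ is cointegrated there is a $\mathcal C$ such that $P_{Z(t)}=P_{\mathcal C\mathcal P X(t)}$ converges weakly to some probability measure $P_{\infty}$ on $\mathcal B(\mathsf C)$. Weak convergence of probability measures on a separable Hilbert space always implies pointwise convergence of characteristic functionals, since each $x\mapsto\exp(\mathrm i\langle u,x\rangle_{\mathsf C})$ is bounded and continuous. Hence $\exp(\Psi_X(t,\mathcal P^{*}\mathcal C^{*}u))\to\widehat P_{\infty}(u)$ for every $u$, and taking the distinguished logarithm (the functionals being non-vanishing in the cumulant-functional framework) and setting $\Psi_{\mathcal C}(u):=\log\widehat P_{\infty}(u)$ produces a genuine cumulant function with the required limit. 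This direction is routine.

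The sufficiency direction is where the real work lies, and it is the step I expect to be the main obstacle. Given the pointwise convergence $\exp(\Psi_X(t,\mathcal P^{*}\mathcal C^{*}u))\to\exp(\Psi_{\mathcal C}(u))$ with $\exp(\Psi_{\mathcal C})$ the characteristic functional of some $P_{\infty}$ (this is what ``$\Psi_{\mathcal C}$ is a cumulant function'' supplies), I want to conclude $P_{Z(t)}\Rightarrow P_{\infty}$, i.e.\ cointegration of $Y$. Here the finite-dimensional shortcut of Proposition~\ref{prop:characterisation}, where mere continuity of the limit at the origin suffices by L\'evy's theorem, \emph{fails}: on an infinite-dimensional Hilbert space, pointwise convergence of characteristic functionals to a bona fide characteristic functional does not force weak convergence, because the family $\{P_{Z(t)}\}_{t\geq 0}$ need not be tight. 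A standard obstruction is a sequence of unit-variance Gaussians supported in mutually orthogonal directions, whose functionals converge to $1$ while the measures do not converge weakly to $\delta_{0}$. The correct tool is the Hilbert-space continuity theorem in its full form (Prokhorov together with a Minlos--Sazonov-type criterion; see e.g.\ Peszat and Zabczyk~\cite{PZ}): one needs uniform tightness of $\{P_{Z(t)}\}$, equivalently equicontinuity at the origin of $u\mapsto\exp(\Psi_X(t,\mathcal P^{*}\mathcal C^{*}u))$ in the Sazonov topology. I would therefore either augment the hypothesis with this equicontinuity/tightness condition, or verify it from structural features of the model such as a uniform trace bound on the covariance operators of $Z(t)$. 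Once tightness is in hand, every weak subsequential limit has characteristic functional $\exp(\Psi_{\mathcal C})$, hence equals $P_{\infty}$ by uniqueness of characteristic functionals, so the whole family converges weakly to $P_{\infty}$ and $Y$ is cointegrated.
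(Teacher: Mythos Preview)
Your approach coincides with the paper's: rewrite $\langle u,\mathcal C\mathcal P X(t)\rangle_{\mathsf C}=\langle\mathcal P^{*}\mathcal C^{*}u,X(t)\rangle_{\mathsf F}$, identify the cumulant of $\mathcal C\mathcal P X(t)$ as $\Psi_X(t,\mathcal P^{*}\mathcal C^{*}u)$, and then trade weak convergence of $P_{\mathcal C\mathcal P X(t)}$ for pointwise convergence of cumulants. The necessity direction you give is exactly what the paper does.

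Where you diverge is in the sufficiency direction, and the divergence is to your credit. The paper's proof simply asserts that ``the existence of a cumulant function $\Psi_{\mathcal C}$ as the limit of $\Psi_X(t,\mathcal P^{*}\mathcal C^{*}u)$ yields the existence of a $P_{\infty}$ having $\Psi_{\mathcal C}$ as its cumulant'' and declares the result proved. It does not address tightness at all. You have correctly identified that this step is unjustified when $\mathsf C$ is infinite-dimensional: your counterexample of unit Gaussians along an orthonormal sequence $(e_n)$ shows that $\widehat P_{Z_n}(u)=\exp(-\tfrac12\langle u,e_n\rangle^{2})\to 1=\widehat{\delta_0}(u)$ pointwise, while $P_{Z_n}\not\Rightarrow\delta_0$. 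So pointwise convergence of characteristic functionals to a bona fide characteristic functional does \emph{not} force weak convergence on an infinite-dimensional Hilbert space; one needs a Sazonov-type equicontinuity or an a priori tightness bound, as you propose.

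In short: the paper's argument is the finite-dimensional one transplanted verbatim, and it omits precisely the issue you flag. Your analysis is more careful than the paper's own proof. If you want a statement that is actually provable as an equivalence, your suggestion to add a uniform tightness hypothesis on $\{P_{\mathcal C\mathcal P X(t)}\}_{t\ge0}$ (or equivalently Sazonov-equicontinuity of the family of characteristic functionals at the origin) is the correct fix; alternatively one restricts to $\dim\mathsf C<\infty$, where the paper's argument goes through by L\'evy's theorem.
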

\begin{proof}
It holds that
$\langle u,\mathcal C\mathcal PX(t)\rangle_{\mathsf C}=\langle\mathcal P^*\mathcal C^*u,X(t)\rangle_{\mathsf F}$
for every $u\in\mathsf C$. Thus,
$$
\log \E\left[\e^{\mathrm{i}\langle u,\mathcal C\mathcal P X(t)\rangle_{\mathsf C}}\right]=\Psi_X(t,\mathcal P^*\mathcal C^*u).
$$
If $(\mathcal P,\mathcal C)$ is a cointegrated pricing system, then $\Psi_{\mathcal C}$ is the cumulant function of
$P_{\infty}$. Opposite, the existence of a cumulant function $\Psi_{\mathcal C}$ as the limit of 
$\Psi_X(t,\mathcal P^*\mathcal C^*u)$ yields the existence of a $P_{\infty}$ having $\Psi_{\mathcal C}$ as its 
cumulant. The result follows.
\end{proof}
We next connect cointegration in Hilbert space to cointegration in finite dimensions, as considered in the previous sections:
\begin{proposition}
\label{prop:finite-dim-coint}
Let $(\mathcal C,\mathcal P)$ be a cointegration pricing system (for the factor process $X$ in $\mathsf F$). 
Assume that $\dim(\ker\mathcal C^{\perp}):=d<\infty$ and $\dim(\ker\mathcal P^{\perp}):=n<\infty$ for
$n,d\in\mathbb N$. Then for every $\mathcal T\in\mathsf C^*$, there exist $\mathbf c_{\mathcal T}\in\R^d, \overline{\mathcal P}\in\R^{d\times n}$ and an $\R^n$-valued factor process $\mathbf X(t)$ such that
$$
\mathcal T\mathcal C\mathcal P X(t)=\mathbf c_{\mathcal T}^{\top}\overline{\mathcal P}\mathbf X(t),
$$
and where the real-valued process $t\mapsto\mathbf c_{\mathcal T}^{\top}\overline{\mathcal P}\mathbf X(t)$ admits a limiting
distribution. 
\end{proposition}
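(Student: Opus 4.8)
The plan is to restrict $\mathcal P$ and $\mathcal C$ to the finite-dimensional subspaces on which they genuinely act, so that the scalar process $\mathcal T\mathcal C\mathcal P X(t)$ is realised as a finite matrix--vector expression applied to a finite-dimensional factor process, and then to transport the limiting distribution from the Hilbert-space cointegration hypothesis by a continuous-mapping argument.

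First I would fix coordinates. Since $(\ker\mathcal P)^{\perp}=\overline{\mathrm{Range}(\mathcal P^{*})}$ is assumed to have dimension $n<\infty$, it is closed and in fact equals $\mathrm{Range}(\mathcal P^{*})$; choose an orthonormal basis $\{f_1,\dots,f_n\}$ of it in $\mathsf F$. In the same way $(\ker\mathcal C)^{\perp}=\overline{\mathrm{Range}(\mathcal C^{*})}=\mathrm{Range}(\mathcal C^{*})$ has dimension $d$, with orthonormal basis $\{p_1,\dots,p_d\}$ in $\mathsf P$. Define the $\R^n$-valued predictable process $\mathbf X$ by $\mathbf X_i(t):=\langle f_i,X(t)\rangle_{\mathsf F}$ and the matrix $\overline{\mathcal P}\in\R^{d\times n}$ by $\overline{\mathcal P}_{ji}:=\langle p_j,\mathcal P f_i\rangle_{\mathsf P}=\langle \mathcal P^{*}p_j,f_i\rangle_{\mathsf F}$. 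Note that neither $\mathbf X$ nor $\overline{\mathcal P}$ depends on $\mathcal T$, matching the way the statement attaches the subscript only to $\mathbf c_{\mathcal T}$.

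Next I would represent $\mathcal T$ and push operators onto their adjoints. By Riesz there is $\tau\in\mathsf C$ with $\mathcal T=\langle\tau,\cdot\rangle_{\mathsf C}$, and then
$$
\mathcal T\mathcal C\mathcal P X(t)=\langle \mathcal C^{*}\tau,\mathcal P X(t)\rangle_{\mathsf P}=\langle \mathcal P^{*}\mathcal C^{*}\tau,X(t)\rangle_{\mathsf F}.
$$
The decisive point is that $\mathcal C^{*}\tau\in\mathrm{Range}(\mathcal C^{*})=(\ker\mathcal C)^{\perp}$, so it expands exactly as $\mathcal C^{*}\tau=\sum_{j=1}^{d}(\mathbf c_{\mathcal T})_j\,p_j$ with $(\mathbf c_{\mathcal T})_j:=\langle\tau,\mathcal C p_j\rangle_{\mathsf C}$, and likewise each $\mathcal P^{*}p_j\in(\ker\mathcal P)^{\perp}$ expands exactly in $\{f_i\}$ with coefficients $\overline{\mathcal P}_{ji}$. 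Substituting both expansions, the only surviving contribution of $X(t)$ is its projection onto $(\ker\mathcal P)^{\perp}$, whose coordinates are precisely $\mathbf X(t)$, and one reads off $\mathcal T\mathcal C\mathcal P X(t)=\mathbf c_{\mathcal T}^{\top}\overline{\mathcal P}\,\mathbf X(t)$. This establishes the claimed algebraic identity.

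For the limiting distribution I would invoke the continuous-mapping theorem for weak convergence. By hypothesis $(\mathcal C,\mathcal P)$ is a cointegration pricing system, so $\mathcal C\mathcal P X(t)$ converges, in the sense of probability measures on $\mathcal B(\mathsf C)$, to some $P_{\infty}$. Because $\mathcal T:\mathsf C\to\R$ is continuous, the image measures converge, giving $P_{\mathcal T\mathcal C\mathcal P X(t)}\to P_{\infty}\circ\mathcal T^{-1}$; this is the sought limiting distribution of $t\mapsto\mathbf c_{\mathcal T}^{\top}\overline{\mathcal P}\mathbf X(t)$. I expect the only delicate step to be the identification $(\ker\mathcal P)^{\perp}=\mathrm{Range}(\mathcal P^{*})$ and its analogue for $\mathcal C$: one must remember that the orthogonal complement of the kernel is the \emph{closure} of the range of the adjoint, and then use finite-dimensionality to discard the closure, which is exactly what guarantees that $\mathcal C^{*}\tau$ and $\mathcal P^{*}p_j$ lie in the chosen finite spans so that the expansions are exact and not merely approximate. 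The remaining manipulations are routine.
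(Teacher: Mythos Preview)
Your proof is correct and follows essentially the same route as the paper: both choose orthonormal bases of $(\ker\mathcal C)^{\perp}$ and $(\ker\mathcal P)^{\perp}$, define $\overline{\mathcal P}$, $\mathbf X(t)$ and $\mathbf c_{\mathcal T}$ in the same way (your $(\mathbf c_{\mathcal T})_j=\langle\tau,\mathcal C p_j\rangle_{\mathsf C}=\mathcal T\mathcal C p_j$ coincides with the paper's definition), and read off the identity. The only cosmetic differences are that you organise the reduction via the range--kernel duality $\mathrm{Range}(\mathcal P^{*})\subset(\ker\mathcal P)^{\perp}$ whereas the paper works directly with $\mathcal P v=\mathcal P(v-\mathrm{Proj}_{\ker\mathcal P}v)$, and that for the limiting distribution you invoke the continuous mapping theorem while the paper uses its cumulant characterisation (Proposition~\ref{prop:equiv-charact-inf}); both are equivalent here. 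One small remark on your closing comment: the inclusion $\mathcal C^{*}\tau\in\mathrm{Range}(\mathcal C^{*})\subset(\ker\mathcal C)^{\perp}$ holds without any finite-dimensionality assumption, so the finiteness is needed only to make the orthonormal basis (and hence the expansion) finite, not to place $\mathcal C^{*}\tau$ in the span.
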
 
\begin{proof}
For any $u\in\mathsf P$, $u^{\perp}:=u-\text{Proj}_{\ker\,\mathcal C}u\in\ker\mathcal C^{\perp}$ and for an ONB $\{h_i\}_{i=1}^d$ in $\ker\mathcal C^{\perp}$ we find,
$$
u^{\perp}=\sum_{i=1}^d\langle u^{\perp},h_i\rangle_{\mathsf P}h_i=\sum_{i=1}^d\langle u,h_i\rangle_{\mathsf P}h_i
-\sum_{i=1}^d\langle \text{Proj}_{\ker\mathcal C}u,h_i\rangle_{\mathsf P}h_i=\sum_{i=1}^d\langle u,h_i\rangle_{\mathsf P}h_i.
$$
But then, since $\mathcal C u=\mathcal C u^{\perp}$ for every $u\in\mathsf P$, it follows
$$
\mathcal C\mathcal P X(t)=\sum_{i=1}^{d}\langle\mathcal P X(t),h_i\rangle_{\mathsf P}\mathcal C h_i.
$$
Next, for any $v\in\mathsf F$, we have that $v^{\perp}:=v-\text{Proj}_{\ker\mathcal P}v\in\ker\mathcal P^{\perp}$,
and for an ONB $\{f_j\}_{j=1}^n$ in $\ker\mathcal P^{\perp}$ it holds that 
$$
v^{\perp}=\sum_{j=1}^n\langle v,f_j\rangle_{\mathsf F}f_j.
$$
Since $\mathcal P v=\mathcal P v^{\perp}$ for any $v\in\mathsf F$, we derive
$$
\mathcal PX(t)=\sum_{j=1}^n\langle X(t),f_j\rangle_{\mathsf F}\mathcal P f_j.
$$
From this we find 
\begin{equation}
\label{eq:coint-FDR}
\mathcal C\mathcal P X(t)=\sum_{i=1}^d\sum_{j=1}^n\langle X(t),f_j\rangle_{\mathsf F}\langle\mathcal P f_j,h_i\rangle_{\mathsf P}\mathcal C h_i.
\end{equation}
Define the $d\times n$-matrix $\overline{\mathcal P}:=\{\langle\mathcal P f_j,h_i\rangle_{\mathsf P}\}_{i=1,\ldots,d,j=1,\ldots,n}$ and the $\R^n$-valued factor process 
$\mathbf X(t):=(\langle X(t),f_1\rangle_{\mathsf F},\ldots,\langle X(t),f_n\rangle_{\mathsf F})^{\top}$. Finally,
we introduce $\mathbf c_{\mathcal T}:=(\mathcal T\mathcal C h_1,\ldots,\mathcal T\mathcal C h_d)^{\top}\in\R^d$, and
the representation of $\mathcal T\mathcal C\mathcal P X(t)$ follows. 

Note that for any $\theta\in\R$,
$$
\theta\mathcal T\mathcal C\mathcal P X(t)=\langle X(t),\mathcal P^*\mathcal C^*\mathcal T^*\theta\rangle_{\mathsf F}.
$$  
Therefore, by the assumption that $\mathcal C\mathcal P X(t)$ admits a limiting distribution in combination with 
Prop.~\ref{prop:equiv-charact-inf}, there exists a
function $\R\ni\theta\mapsto \Psi_{\mathcal T\mathcal C}(\theta)\in\mathbb C$ given by
$$
\Psi_{\mathcal T\mathcal C}(\theta)=\lim_{t\rightarrow\infty}\Psi_X(t,\mathcal P^*\mathcal C^*\mathcal T^*\theta)
=\Psi_{\mathcal C}(\mathcal T^*\theta).
$$
The function $\Psi_{\mathcal T\mathcal C}$ is a cumulant function, since $\mathcal T$ is a continuous linear operator
(see e.g. Sato~\cite[Prop.~2.5 (viii)]{Sato}). 
The Proposition follows. 
\end{proof}
Notice that for any $\mathcal T\in\mathsf C^*$, $\mathcal T\mathcal C\in\mathsf P^*$, and we can interpret
$(\mathcal T\mathcal C,\mathcal P)$ as a cointegration pricing system with cointegration space $\R$. This holds
for general cointegration pricing system $(\mathcal C,\mathcal P)$ and not only those for which the 
complement space of the kernels of $\mathcal C$ and $\mathcal P$ are finite.

We see from the proof of Proposition~\ref{prop:finite-dim-coint} that only $\mathbf c_{\mathcal T}$ is depending on 
$\mathcal T$, which explains the subscript. Given the factor process $\mathbf X$ in Prop.~\ref{prop:finite-dim-coint},
it follows that $(\mathbf c_{\mathcal T},\overline{\mathcal P})$ is a cointegration pricing system, and indeed,
$\mathbf c_{\mathcal T}$ is a cointegration vector for the price vector $\mathbf S(t)=\overline{\mathcal P}\mathbf X(t)$.
The vector $\mathbf c_{\mathcal T}$ is further depending on $\mathcal C$, naturally. The pricing matrix
$\overline{\mathcal P}$ depends on the basis functions $\{f_j\}_{j=1}^n$ in $\ker\mathcal P^{\perp}$
and $\{h_i\}_{i=1}^d$ in $\ker\mathcal C^{\perp}$. Thus, it depends on the cointegration
pricing system $(\mathcal C,\mathcal P)$. The finite-dimensional factor process $\mathbf X$ depends on the 
basis $\{f_j\}_{j=1}^n$, and thus on the pricing operator $\mathcal P$. 

From \eqref{eq:coint-FDR} it follows that $\mathcal C\mathcal P X(t)$ is a process with values in the
finite-dimensional subspace $\text{span}\{\mathcal C h_1,\ldots,\mathcal C h_d\}$ of $\mathsf C$ when $\ker\mathcal C^{\perp}$
and $\ker\mathcal P^{\perp}$ have finite dimension. Thus, we may introduce the following definition:
\begin{definition}
\label{def:FDR}
A cointegration pricing system $(\mathcal C,\mathcal P)$ has a finite dimensional realization (FDR) if, for $n,d\in\mathbb N$, there exist an $\R^n$-valued factor process $\mathbf X$, a $d\times n$ pricing matrix $\overline{\mathcal P}$ and
a $c\in \mathsf C^{\times d}$ such that $\mathcal C\mathcal P X(t)=c^{\top}\overline{\mathcal P}\mathbf X(t)$.   
\end{definition}
In view of Proposition~\ref{prop:finite-dim-coint}, we have an FDR when $\ker\mathcal C^{\perp}$ and $\ker\mathcal P^{\perp}$ are
finite dimensional. In this case, $c=(\mathcal C h_1,\ldots,\mathcal C h_d)^{\top}$ with $\{h_i\}_{i=1}^d$
being the ONB of $\ker\mathcal C^{\top}$. If $(\mathcal C,\mathcal P)$ is a general cointegration pricing
system which has an FDR, then for any $\mathcal T\in\mathsf C$ we find that 
$$
\mathcal T\mathcal C\mathcal P X(t)=\mathbf c_{\mathcal T}^{\top}\overline{\mathcal P}\mathbf X(t),
$$
for $\mathbf c_{\mathcal T}:=\mathcal Tc=(\mathcal Tc_1,\ldots\mathcal Tc_d)^{\top}\in\R^d$. Hence, 
$(\mathcal Tc,\overline{\mathcal P})$ will be a finite dimensional cointegrated pricing system for the
factor process $\mathbf X$.
We remark that Definition~\ref{def:FDR} does not really rest on the fact that there exist any limiting distribution, 
but as we work
with cointegration in this paper, we focus on cointegration pricing systems, that is, pricing systems $(\mathcal C,\mathcal P)$
for which $\mathcal C\mathcal P X(t)$ admits a limiting distribution. 

We remark that we have not assumed any minimality of the pricing matrix $\overline{\mathcal P}$ in the above 
considerations. We recall from the proof of Prop.~\ref{prop:finite-dim-coint} that the $d\times n$-matrix
$\overline{\mathcal P}$ has elements $\langle\mathcal P f_j,h_i\rangle_{\mathsf P}$, and minimality is achieved as
long as this matrix has full rank. However, the next proposition shows that we must take into account a possible
finite-dimensionality of the factor process $X$ as well.
Indeed, another situation where we may have an FDR is when the factor process has a finite-dimensional state space:
\begin{proposition}
Assume that the factor process $\{X(t)\}_{t\geq 0}$ takes values in $\mathsf F_n\subset\mathsf F$, where
$\dim(\mathsf F_n):=n<\infty$ for $n\in\mathbb N$. Then any cointegration pricing system
$(\mathcal C,\mathcal P)$ has a finite dimensional realization, with $\overline{\mathcal P}=\text{Id}$ (the identity
matrix on $\R^n$),  $\mathbf X(t):=(\langle X(t),f_1\rangle_{\mathsf F},\ldots,\langle X(t),f_n\rangle_{\mathsf F})^{\top}\in\R^n$
for an ONB $\{f_j\}_{j=1}^n$ of $\mathsf F_n$ and $c=(\mathcal C\mathcal Pf_1,\ldots,\mathcal C\mathcal P f_n)^{\top}
\in\mathsf C^{\times n}$.  
\end{proposition}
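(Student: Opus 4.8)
The plan is to exploit the finite-dimensionality of the state space $\mathsf F_n$ directly, so that the orthonormal expansion of $X(t)$ terminates after $n$ terms and no analytic subtleties about infinite series arise. Fix the ONB $\{f_j\}_{j=1}^n$ of $\mathsf F_n$ from the statement. Since $X(t)$ takes values in $\mathsf F_n$ by hypothesis, I would first record that the expansion
$$
X(t)=\sum_{j=1}^n\langle X(t),f_j\rangle_{\mathsf F}f_j
$$
holds exactly for every $t\geq 0$, $\mathbb P$-almost surely, and not merely as an $L^2$-limit; this is immediate from $X(t)\in\mathsf F_n$ together with $\dim\mathsf F_n=n$.

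Next I would apply the bounded linear operator $\mathcal C\mathcal P\in L(\mathsf F,\mathsf C)$ to this identity. Because the sum is finite, linearity alone (no continuity argument) suffices to push $\mathcal C\mathcal P$ through, yielding
$$
\mathcal C\mathcal P X(t)=\sum_{j=1}^n\langle X(t),f_j\rangle_{\mathsf F}\,\mathcal C\mathcal P f_j.
$$
I would then match this against Definition~\ref{def:FDR}. Setting $\mathbf X(t):=(\langle X(t),f_1\rangle_{\mathsf F},\ldots,\langle X(t),f_n\rangle_{\mathsf F})^{\top}\in\R^n$ and $c:=(\mathcal C\mathcal P f_1,\ldots,\mathcal C\mathcal P f_n)^{\top}\in\mathsf C^{\times n}$, the right-hand side is precisely $c^{\top}\mathbf X(t)=c^{\top}\,\mathrm{Id}\,\mathbf X(t)$. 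Thus, taking $d:=n$ and $\overline{\mathcal P}:=\mathrm{Id}$ the identity on $\R^n$, the defining relation $\mathcal C\mathcal P X(t)=c^{\top}\overline{\mathcal P}\mathbf X(t)$ holds, which is exactly a finite dimensional realization.

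I do not expect any genuine analytic obstacle. In contrast to Proposition~\ref{prop:finite-dim-coint}, there is no need to reduce to a one-dimensional cointegration space through some $\mathcal T\in\mathsf C^*$, nor to truncate an infinite orthonormal expansion, precisely because the factor process already lives in finite dimensions. The only two points deserving a line of justification are (i) that the expansion is an exact finite sum, and (ii) that $\mathcal C\mathcal P$ commutes with a finite sum — both are elementary. Finally, I would note that the limiting-distribution requirement built into the notion of a cointegration pricing system plays no role in establishing the FDR; as remarked after Definition~\ref{def:FDR}, the FDR is a purely structural property, separate from the existence of a limiting law for $\mathcal C\mathcal P X(t)$.
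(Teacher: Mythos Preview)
Your proposal is correct and follows essentially the same approach as the paper: expand $X(t)$ in the ONB of $\mathsf F_n$, push $\mathcal C\mathcal P$ through the finite sum by linearity, and read off the FDR data $c$, $\overline{\mathcal P}=\mathrm{Id}$, $\mathbf X(t)$. The paper's proof is exactly this, stated in two lines without the surrounding commentary.
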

\begin{proof}
If $X(t)\in\mathsf F_n$, then $X(t)=\sum_{j=1}^n\langle X(t),f_j\rangle_{\mathsf F}f_j$ and therefore
$$
\mathcal C\mathcal P X(t)=\sum_{j=1}^n\langle X(t),f_j\rangle_{\mathsf F}\mathcal C\mathcal P f_j=c^{\top}
\text{Id}\mathbf X(t).
$$
The result follows.
\end{proof}
This result indicates strongly the possible non-uniqueness of the FDR, since depending on the pricing operator
$\mathcal P$, one may specify a different $\overline{\mathcal P}$ than the identity matrix, and thus also different 
$c$. It also shows that the question of minimality depends on $\mathcal C, \mathcal P$ and the possible finite
dimensionality of $X$. 

Let us now focus on the case where $\mathsf F$ and $\mathsf P$ can be represented
as product spaces, e.g., when $\mathsf F=\mathsf H^{\times n}$ and $\mathsf P=\mathsf K^{\times d}$
for two separable Hilbert spaces $\mathsf H$ and $\mathsf K$. We denote the inner product as usual by 
$\langle\cdot,\cdot\rangle_{i}$ with corresponding norms $|\cdot|_{i}$, where the subscript indicates the space, here $i=\mathsf H,\mathsf K$. The inner product on the product space $\mathsf F$ is then given by
$\langle u,v\rangle_{\mathsf F}=\sum_{j=1}^n\langle u_j,v_j\rangle_{\mathsf H}$ for 
$u=(u_1,\ldots,u_n)\in\mathsf F$ and $v=(v_1,\ldots,v_n)\in\mathsf F$ (and likewise for $\mathsf P$). 

To make an example, suppose we have given a factor process $X\in\mathsf H^{\times n}$ and a pricing operator $\mathcal P$ 
given as an $d\times n$-matrix of operators $\mathcal P=\{\mathcal P_{ij}\}_{i=1,\ldots,d,j=1,\ldots,n}$ with
$\mathcal P_{ij}\in L(\mathsf H,\mathsf K)$. Then the pricing vector will be $Y(t)=\mathcal P X(t)$, which is a
$\mathsf K^{\times d}$-valued stochastic process. Indeed, we have that $Y=(Y_1,\ldots,Y_d)^{\top}$ with
$$
Y_i(t)=\sum_{j=1}^n\mathcal P_{ij}X_j(t)
$$
for $i=1,\ldots,d$. In analogy with Example~\ref{motivating-ex}, we assume that $(X_1,\ldots,X_{n-1})^{\top}\in\mathsf H^{\times (n-1)}$ admits a limiting distribution, while $X_n$ may be non-stationary. We observe that any $\mathcal C=(\mathcal C_1,\ldots,\mathcal C_d)^{\top}$ with $\mathcal C_i\in L(\mathsf K,\mathsf C)$ will be such that $\mathcal C\in L(\mathsf K^{\times d},\mathsf C)$. Under the condition
$\sum_{i=1}^d\mathcal C_i\mathcal P_{in}=0$ we find $\mathcal C Y(t)=\sum_{i=1}^d\sum_{j=1}^{n-1}\mathcal C_i\mathcal P_{ij}X_j(t)$, that is, a $\mathsf C$-valued stochastic process not depending on $X_n$ but only on $X_j$ for $j=1,\ldots,n-1$. This
provides us with a simple example of a cointegration pricing system. 

A way to generate a system of factor processes $X\in\mathsf H^{\times n}$ can be as follows: consider an $\R^m$-valued stochastic process $\{\mathbf Z(t)\}_{t\geq 0}$ and
$\mathcal A\in L(\R^m,\mathsf H^{\times n})$. For $b\in\mathsf H^{\times n}$, define the factor process
$$
X(t)=\mathcal A\mathbf Z(t)+b.
$$
We remark that $\mathcal A$ can be represented as an $n\times m$-matrix with elements in $\mathsf H$. Indeed, the 
columns of this matrix will be given by the action of $\mathcal A$ on the canonical basis vectors in $\R^m$. If $\mathsf H$
is some space of functions on $\R_+$, we may relate the factor process $X$ to the affine models of forward prices from the previous section, i.e., the affine
forward models provide a class of factors in an infinite dimensional framework. The existence of a limiting distribution of one or more of the
factors $X_j, j=1,\ldots,n$ can be traced back to the process $\mathbf Z$. Indeed, this simplified case relates us back to the models
consider in Section~\ref{sect:forward}, for example the polynomial processes in Proposition~\ref{prop:polynomial}.

\subsection{A discussion of cross-commodity forward markets}

Let us now focus specifically on commodity forward markets, and start with a discussion on cross-commodity models. 
Suppose we have $d$ forward markets, with forward price dynamics denoted by $f_i(t,x)$, $i=1,\ldots,d$ and 
$x\in\R_+$ being time to maturity. We are aiming at a $d$-dimensional model of the forward
curve dynamics $t\mapsto f(t,\cdot)=(f_1(t,\cdot),\ldots,f_d(t,\cdot))^{\top}$. We choose $\mathsf H$ to be a Hilbert space
of real-valued measurable functions on $\R_+$. Following the analysis in Benth and Kr\"uhner~\cite{BK-sifin}, a 
convenient choice of such a space could be the so-called Filipovic space of absolutely continuous functions (see Appendix~\ref{app:filipovic}
for a definition). 

Based on the analysis in Benth and Kr\"uhner~\cite{BK-sifin} (see also Benth and Kr\"uhner~\cite{BK-CIMS}), the forward price dynamics $\{f(t,\cdot)\}_{t\geq 0}$
can be expressed as a $\mathsf H^{\times d}$-valued stochastic process 
\begin{equation}
\label{eq:cross-comm-forward-model}
df(t,\cdot)=\partial f(t,\cdot)\,dt+\beta(t,f(t,\cdot))\,dt+\sigma(t,f(t,\cdot))\,dL(t)
\end{equation}
where $L$ is a $\mathsf V$-valued square-integrable L\'evy process with zero mean and $\mathsf V$ being a separable Hilbert space. 
We use the notation $\partial$ for the $d\times d$ matrix-operator
\begin{equation}
\partial=\left[\begin{array}{cccc}  \frac{\partial}{\partial x} & 0 & \cdots & 0 \\
0 & \frac{\partial}{\partial x} & \cdots & 0 \\
..  &  . & \cdots & . \\
0 & 0 & \cdots & \frac{\partial}{\partial x}
\end{array}\right],   
\end{equation}
with $\partial/\partial x$ being the derivative operator on the functions in $\mathsf H$. We assume that this operator is 
a densely defined unbounded operator on $\mathsf H$ which is the generator
of a $C_0$-semigroup (the shift semigroup). This holds if we choose $\mathsf H$ to be the Filipovic space, say. 
Further, the measurable mappings $\sigma:\R_+\times \mathsf H^{\times d}\rightarrow L(\mathsf V,\mathsf H^{\times d})$ and 
$\beta:\R_+\times \mathsf H^{\times d}\rightarrow\mathsf H^{\times d}$ are assumed to satisfy the Lipschitz conditions stated in
Peszat and Zabczyk~\cite[Section~9.2]{PZ} such that there exists a unique mild predictable cadlag solution 
to \eqref{eq:cross-comm-forward-model}.

The function $\beta$ models the risk premium in this cross-commodity model of forward curves. We note in passing that 
\eqref{eq:cross-comm-forward-model} is formulated under $\mathbb P$, and to ensure an arbitrage-free dynamics 
there must exist a probability $\mathbb Q\sim\mathbb P$ such that the $\mathbb Q$-dynamics of $f$ is
$$
df(t,\cdot)=\partial f(t,\cdot)\,dt+dM(t)
$$
where $M$ is a $\mathsf H^{\times d}$-valued  (local) $\mathbb Q$-martingale (see Benth \& Kr\"uhner~\cite{BK-CIMS}). We will not pursue the existence of
such a $\mathbb Q$ in further detail here.

We may view the cross-commodity forward model \eqref{eq:cross-comm-forward-model} in our contegration context by choosing the
factor process $X$ to be equal to the price vector process $f$. Thus, we have $\mathsf H=\mathsf K$ and $n=d$, with a pricing
matrix $\mathcal P$ simply being the identity operator in $\mathsf H^{\times d}$. In particular, we let $\mathsf P=\mathsf H^{\times d}$,
i.e., the pricing space is the product space.
In many markets, prices are naturally varying over seasons. For example in power markets, prices are typically higher in heating and cooling
seasons. Such a behaviour may be modelled into $\beta$. Further, many commodities are based on extinguishable resources, with oil and gas as prime examples. For such commodities, one may expect non-stationarity effects in prices.  Other sources of non-stationarity are technological changes and inflation. Such non-stationarity could possibly be modelled in
the $\beta$, as well, for example by adding dependency on additional (non-stationary) stochastic factors $Z$, e.g., 
assuming a drift of the form $\beta(t,f(t,\cdot),Z(t))$. The additional factors $Z$ may be Hilbert-valued processes. 

Cointegration in this context could be formulated as follows: There is an operator 
$\mathcal C\in L(\mathsf H^{\times d},\mathsf H)$ such that 
the $\mathsf H$-valued stochastic process $t\mapsto g(t,\cdot):=\mathcal C f(t,\cdot)$ admits a limiting distribution. In many applications one is interested in the spread between two or more forward markets, and it is natural to consider linear combinations of the forward curves, which again will be an element in the space of (marginal) forward curves. This gives a rationale for choosing $\mathsf C=\mathsf H$. If we assume the rather strong condition that $\partial$ commutes with
$\mathcal C$ in the sense that $\mathcal C\partial=\frac{\partial}{\partial x}\mathcal C$ on $\text{Dom}(\partial)$, we find 
the stochastic dynamics of $g$ to be 
$$
dg(t,\cdot)=\frac{\partial}{\partial x}g(t,\cdot)\,dt+\mathcal C\beta(t,f(t,\cdot),Z(t))\,dt+\mathcal C\sigma(t,f(t,\cdot))\,dL(t).
$$  
Thus, 
$$
g(t,\cdot)=\mathcal S(t)g_0(\cdot)+\int_0^t\mathcal S(t-s)\mathcal C\beta(s,f(s,\cdot),Z(s))\,ds+\int_0^t\mathcal S(t-s)\mathcal C\sigma(s,f(s,\cdot))\,dL(s),
$$
where $\mathcal S$ is the $C_0$-semigroup generated by $\partial/\partial x$ on $\mathsf H$ (the shift semigroup, also called
the translation semigroup), and $g(0,\cdot)=\mathcal C f(0,\cdot)=:g_0(\cdot)\in\mathsf H$. 
The existence of a limiting distribution is closely linked to properties of the $C_0$-semigroup along with $\beta$ and $\sigma$. 
Specializing to $L=W$, a Wiener process, and $\mathsf H$ being the Filipovic space, we may resort to 
Tehranchi~\cite{Tehranchi} for sufficient conditions for the existence of an invariant measure of $g$. In particular, these 
conditions will include the time-homogeneity and Lipschitzianity of $\beta$ and $\sigma$. We remark in passing that Tehranchi~\cite{Tehranchi} treats HJM models, which has a nonlinearity in the drift satisfying a no-arbitrage condition with the volatility $\sigma$. In our context we will
have a simplified situation where this drift condition is not needed.

As a specific case, we could consider the highly dependent power forward markets in Germany and France. In Germay, there has been a gradual increase of renewable power generation from photovoltaic and wind, and we
let $Z(t)$ be a real-valued stochastic process measuring the total generation of such. Since the amount of sunshine over the 
day is varying with
season, and so is the average wind speed, one has that $Z$ is likely to vary seasonally. Moreover, with the "Energiewende" still 
in place, the process will likely show an increasing trend, at least on a short term horizon. Hence, $Z$ may be thought of as a non-stationary 
stochastic process. Assume now that $\beta(t,f(t,.),Z(t))=(\beta_1Z(t),\beta_2Z(t))^{\top}$, for $\beta_1,\beta_2$ two constants, 
which is an $\R^2$-valued stochastic process,
and thus trivially in $\mathsf H^{\times 2}$. Further, we let the volatility be constant, in the sense that 
$\sigma(s,f(s,\cdot))=\Sigma\in L(\mathsf V,\mathsf H^{\times 2})$. Under this specification, we choose $\mathcal C^{\top}:=(\beta_2,-\beta_1)$, which will commute with $\partial/\partial x$, and we find for 
$g(t,\cdot):=\beta_2f_1(t,\cdot)-\beta_1f_2(t,\cdot)$  
\begin{equation}
\label{g-eq}
g(t,\cdot)=\mathcal S(t)g_0(\cdot)+\int_0^t\mathcal S(t-s)\mathcal C^{\top}\Sigma\,dL(s).
\end{equation}
The cointegration process $g$ will be an Ornstein-Uhlenbeck process with unbounded operator $\partial/\partial x$ and
volatility $\mathcal C^{\top}\Sigma$. Invariant measures for L\'evy-driven Ornstein-Uhlenbeck processes are thoroughly discussed in Applebaum~\cite{Apple-review} (see also references therein). Although Tehranchi~\cite{Tehranchi} considers more general HJM-models 
with Gaussian noise, one can apply his methods to conclude that $g$ in \eqref{g-eq} admits a limiting distribution if
we choose $\mathsf H$ to be the Filipovic space (see Appendix~\ref{app:filipovic}). We remark in passing that Tehranchi~\cite{Tehranchi} makes use of the 
fact that the shift semigroup $\mathcal S(t)$ is a strict contraction on a convenient subspace of the Filipovic space.


So far we have only considered arithmetic forward models. To introduce a geometric model, of the form $F(t,x):=\exp(f(t,x))$, with
$f$ defined by the dynamics \eqref{eq:cross-comm-forward-model} and $\exp(f):=(\exp(f_1),\ldots,\exp(f_d))$, we must impose additional structure on the Hilbert space $\mathsf H$. Indeed, it has to be closed under
exponentiating, that is, for any $h\in\mathsf H$, it must hold that $\exp h\in\mathsf H$. If $\mathsf H$ is a Banach algebra under pointwise multiplication, this holds true, since in that case we have $|h^n|_{\mathsf H}\leq|h|_{\mathsf H}^n$ and thus $|\exp h|_{\mathsf H}\leq 
\exp |h|_{\mathsf H}<\infty$. We remark that after an appropriate scaling of the norm in the Filipovic space, it becomes a Banach algebra
(see Benth and Kr\"uhner~\cite{BK-CIMS}). 

\subsection{A three-factor example}
We end this Section with a concrete example adopted from Benth~\cite{benth-eberleinfest}. Let $\mathsf H$ be a Hilbert space of real-valued measurable functions on $\R_+$. Consider a three factor processes $X=(X_1,X_2,X_3)^{\top}\in\mathsf H^{\times 3}$ given by $X_3(t)=L(t)$ where $L$ is an $\R$-valued L\'evy process and for $x\in\R_+$,
\begin{equation}
\label{eq:eberlein-model}
X_k(t,x)=h_k(t,x)+\int_0^tg_k(t+x-s)\,dU_k(s), k=1,2.
\end{equation}
Here, for $k=1,2$, $U_k$ are $\R$-valued L\'evy processes
with zero mean and finite variance, and 
$h_k(t,\cdot),g_k\in\mathsf H$.  In the next lemma, we state conditions
such that $\{X_k(t)\}_{t\geq 0}$ becomes an $\mathsf H$-valued stochastic process.
\begin{lemma}
\label{lemma:lss}
Suppose that the shift semigroup $\{\mathcal S(t)\}_{t\geq 0}$ is bounded on $\mathsf H$, i.e., $\mathcal S(t)\in L(\mathsf H)$ for all $t\geq 0$. 
If $\int_0^t|g_k(s+\cdot)|_{\mathsf H}^2\,ds<\infty$ for every $t\geq 0$, then $\{X_k(t)\}_{t\geq 0}$ defined in \eqref{eq:eberlein-model} is an $\mathsf H$-valued stochastic process. Its cumulant is
$$
\log\E\left[\exp\left(\mathrm{i}(h,X_k(t))_{\mathsf H}\right)\right]=\mathrm{i}(h,h_k(t))_{\mathsf H}+\int_0^t\psi_{U_k}\left(
(h,g_k(s+\cdot))_{\mathsf H}\right)\,ds,
$$
for $h\in\mathsf H$ and $\psi_{U_k}$ the cumulant of $U_k(1)$. 
\end{lemma}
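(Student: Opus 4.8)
The plan is to read the stochastic convolution in \eqref{eq:eberlein-model} as an $\mathsf H$-valued Wiener-type integral against the scalar L\'evy process $U_k$, to deduce its membership in $\mathsf H$ from the stated $L^2$-condition, and then to obtain the cumulant by projecting onto an arbitrary $h\in\mathsf H$ and reducing to a real-valued integral, exactly in the spirit of the proof of Lemma~\ref{lemma:strict-stat-ls}.

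First I would rewrite the kernel in semigroup form. Writing $(\mathcal S(\tau)\phi)(x)=\phi(x+\tau)$ for the shift semigroup, the identity $g_k(t+x-s)=(\mathcal S(t-s)g_k)(x)$ holds for $0\le s\le t$, so that
$$
X_k(t,\cdot)=h_k(t,\cdot)+\int_0^t\mathcal S(t-s)g_k\,dU_k(s).
$$
The hypothesis $\mathcal S(t)\in L(\mathsf H)$ guarantees $\mathcal S(t-s)g_k\in\mathsf H$, so $s\mapsto\mathcal S(t-s)g_k$ is a deterministic $\mathsf H$-valued integrand. Since $|\mathcal S(u)g_k|_{\mathsf H}=|g_k(u+\cdot)|_{\mathsf H}$, a change of variables turns the assumption $\int_0^t|g_k(s+\cdot)|_{\mathsf H}^2\,ds<\infty$ into $\int_0^t|\mathcal S(t-s)g_k|_{\mathsf H}^2\,ds<\infty$, i.e.\ the integrand lies in $L^2([0,t];\mathsf H)$. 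Together with the square-integrability and zero mean of $U_k$, this is precisely the condition ensuring that the $\mathsf H$-valued stochastic integral is well defined (see e.g.\ Peszat and Zabczyk~\cite{PZ}); as $h_k(t,\cdot)\in\mathsf H$ by assumption, $X_k(t,\cdot)$ is an $\mathsf H$-valued random variable.

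For the cumulant I would fix $h\in\mathsf H$ and use that the bounded functional $(h,\cdot)_{\mathsf H}$ commutes with the integral, giving
$$
(h,X_k(t))_{\mathsf H}=(h,h_k(t))_{\mathsf H}+\int_0^t(h,\mathcal S(t-s)g_k)_{\mathsf H}\,dU_k(s),
$$
a scalar deterministic integrand $f(s):=(h,\mathcal S(t-s)g_k)_{\mathsf H}$ which lies in $L^2([0,t])$ by Cauchy--Schwarz. Approximating $f$ by step functions and invoking the independent, stationary increments of $U_k$ (the same computation as in Lemma~\ref{lemma:strict-stat-ls}) yields
$$
\log\E\left[\exp\left(\mathrm{i}\int_0^t f(s)\,dU_k(s)\right)\right]=\int_0^t\psi_{U_k}(f(s))\,ds,
$$
with $\psi_{U_k}$ the cumulant of $U_k(1)$. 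The deterministic part contributes $\mathrm{i}(h,h_k(t))_{\mathsf H}$, and a final change of variables $u=t-s$, together with $\mathcal S(u)g_k=g_k(u+\cdot)$, converts $\int_0^t\psi_{U_k}(f(s))\,ds$ into $\int_0^t\psi_{U_k}((h,g_k(u+\cdot))_{\mathsf H})\,du$, which is the claimed formula.

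The main obstacle is the rigorous justification of the two interchanges: that $(h,\cdot)_{\mathsf H}$ may be pulled inside the $\mathsf H$-valued integral, and that the limiting argument delivering the exponent $\int_0^t\psi_{U_k}(f(s))\,ds$ is valid under the $L^2$-integrand hypothesis. Both rest on approximating the deterministic integrand by simple functions and passing to the limit in $L^2$, using the zero mean and finite variance of $U_k$ to control the convergence and to ensure that the distinguished logarithm is well defined along the way.
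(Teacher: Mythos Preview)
Your proposal is correct and follows essentially the same route as the paper: rewrite the kernel via the shift semigroup, invoke Peszat--Zabczyk~\cite{PZ} under the $L^2$-condition to get an $\mathsf H$-valued integral, pull the functional $(h,\cdot)_{\mathsf H}$ inside, bound the resulting scalar integrand by Cauchy--Schwarz, and compute the cumulant from the independent increments of $U_k$. The only point the paper makes more explicit is the integrability of $s\mapsto\psi_{U_k}\big((h,g_k(s+\cdot))_{\mathsf H}\big)$, which it verifies via the elementary estimate $|\e^{\mathrm{i}zy}-1-\mathrm{i}zy|\le\tfrac12 z^2y^2$ and the zero-mean, finite-variance form of $\psi_{U_k}$; you gesture at this correctly in your final paragraph, and supplying that bound would complete your argument.
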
 
\begin{proof}
Fix $t\geq 0$. By assumption, it holds that 
$g_k(t-s+\cdot)=\mathcal S(t-s)g_k(\cdot)\in\mathsf H$ for all $s\in[0,t]$. From Peszat and Zabczyk~\cite{PZ}, 
the stochastic integral $\int_0^tg_k(t-s+\cdot)\,dU_k(s)$ is well-defined and defines an element in $\mathsf H$ if
$\int_0^t|g_k(t-s+\cdot)|_{\mathsf H}^2\,ds<\infty$, which holds by assumption. Thus,  $\{X_k(t)\}_{t\geq 0}$ is
an $\mathsf H$-valued stochastic process. 

We have that the operator $G(t-s)(h)=(h,g_k(t-s+\cdot))_{\mathsf H}$ is a linear functional on $\mathsf H$. Moreover, by the Cauchy-Schwartz inequality,
\begin{align*}
\int_0^tG^2(t-s)(h)\,ds&=\int_0^t(h,g_k(t-s+\cdot))_{\mathsf H}^2\,ds \\
&\leq |h|_{\mathsf H}^2\int_0^t|g_k(s+\cdot)|^2_{\mathsf H}\,ds.
\end{align*} 
Hence, by the integrability assumption on the norm of $g_k$, $s\rightarrow G(t-s)(h)$ is $U_k$-integrable on $[0,t]$, and by linearity 
we find 
$$
(h,\int_0^tg_k(t-s+\cdot)\,dU_k(s))_{\mathsf H}=\int_0^t(h,g_k(t-s+\cdot))_{\mathsf H}\,dU_k(s)=\int_0^tG_k(t-s)(h)\,dU_k(s).
$$
Hence,
\begin{align*}
\log \E\left[\exp\left(\mathrm{i}(h,\int_0^tg_k(t-s+\cdot)\,dU_k(s))_{\mathsf H}\right)\right]&=\log\E\left[\mathrm{i}
\int_0^tG(t-s)(h)\,dU_k(s)\right] \\
&=\int_0^t\psi_{U_k}\left(G(s)(h)\right)\,ds.
\end{align*}
Since $U_k$ is a zero mean square integrable L\'evy process, its cumulant becomes 
$$
\psi_{U_k}(z)=-\frac12\sigma^2z^2+\int_{\R}(\e^{\mathrm i z y}-1-\mathrm{i}zy)\,\ell(dy)
$$
for $\sigma\geq 0$ a constant and $\ell$ the L\'evy measure (see Applebaum~\cite{apple-book}). We have 
$$
|\e^{\mathrm i z y}-1-\mathrm{i}zy|=|(\mathrm{i}z)^2\int_0^y\int_0^x\e^{\mathrm{i}zu}\,du\,dx|\leq \frac12z^2y^2,
$$
and therefore $\psi_{U_k}(G(s)(h))$ is integrable on $[0,t]$ whenever $G(s)(h)\in L^2([0,t])$, which holds by assumption
after appealing to the Cauchy-Schwartz inequality, as argued above.  
\end{proof}
In the Lemma~\ref{lemma:lss} above we assumed that the function $[0,t]\ni s\mapsto |g_k(s+\cdot)|_{\mathsf H}\in \R_+$ is in $L^2([0,t])$. As the shift operator $\mathcal S(t)$ is assumed continuous, a sufficient condition for this to hold is that $s\mapsto\|\mathcal S(s)\|_{\text{op}}\in L^2([0,t])$. Whenever the family
of shift operators defines a strongly continuous semigroup, say, this holds true. If in addition $\{\mathcal S(t)\}_{t\geq 0}$ is
exponentially stable, we have that $s\mapsto\|\mathcal S(s)\|_{\text{op}}\in L^2(\R_+)$. If 
$s\mapsto |g_k(s+\cdot)|_{\mathsf H}\in L^2(\R_+)$
and $h_k(t)$ has a limit in $\mathsf H$ as $t\rightarrow\infty$, it follows from Lemma~\ref{lemma:lss} that $\{X_k\}_{t\geq 0}$ admits a 
limiting distribution in $\mathsf H$. 

Introduce next the pricing operator $\mathcal P\in L(\mathsf H^{\times 3},\mathsf H^{\times 2})$ simply as
\begin{equation}
\mathcal P=\left[\begin{array}{ccc} \text{Id} & 0 & \text{Id} \\ 0 & \text{Id} & \text{Id} \end{array}\right],
\end{equation}
where $\text{Id}$ is the identity operator on $\mathsf H$. If we assume $\mathsf H$ to be a Banach algebra, we can define the
exponential forward price dynamics for a bivariate commodity market by
\begin{equation}
F(t):=\exp\left(\mathcal P X(t)\right).
\end{equation}
Following the analysis in Benth~\cite{benth-eberleinfest}, we can choose $h_k(t)$ to ensure an arbitrage-free dynamics (see Prop.~2 in
\cite{benth-eberleinfest}). One can also think of $h_k$ as a model for the market price of risk/risk premium in the forward market.

In this bivariate cross commodity forward price model, we see that $\ln F(t)=\mathcal P X(t)$, and thus for any $\mathcal C\in L(\mathsf H^{\times 2},\mathsf C)$, we have 
$$
\mathcal C\ln F(t)=\mathcal C_1X_1(t)+\mathcal C_2 X_2(t)+(\mathcal C_1+\mathcal C_2)X_3(t).
$$
Here we have represented the operator $\mathcal C$ in matrix form, i.e.,
$$
\mathcal C=\left[\begin{array}{cc} \mathcal C_1 & \mathcal C_2\end{array}\right]
$$
for $\mathcal C_i\in L(\mathsf H,\mathsf C), i=1,2$. Letting $\mathcal C_2=-\mathcal C_1$, we find $\mathcal C\ln F(t)=\mathcal C_1(X_1(t)-X_2(t))$.  In the next lemma, we state sufficient conditions for $\mathcal C_1(X_1(t)-X_2(t))$ to admit a limiting distribution in
$\mathsf H$, which thus yield sufficient conditions for having a cointegrated model. 
\begin{lemma}
\label{lemma:limit-lss}
Assume that the shift operator $\mathcal S(t)$ is bounded in $\mathsf H$ for all $t\geq 0$ and $|g_k(s+\cdot)|_{\mathsf H}\in L^2(\R_+)$ for $k=1,2$. If $h_\infty := \lim_{t\rightarrow\infty}(h_1(t)-h_2(t))$ exists in 
$\mathsf H$, then $\mathcal C_1(X_1(t)-X_2(t))$ admits a limiting distribution in $\mathsf C$. This 
limiting distribution has cumulant
\begin{align*}
&\lim_{t\rightarrow\infty}\log\E\left[\exp\left(\mathrm{i}(h,\mathcal C_1(X_1(t)-X_2(t)))_{\mathsf C}\right)\right] \\
&\quad
=\mathcal C_1h_\infty+\int_0^{\infty}\psi_U\left((\mathcal C_1^*h,g_1(s+\cdot))_{\mathsf C},-(\mathcal C_1^*h,g_2(s+\cdot))_{\mathsf C}\right)\,ds
\end{align*}
where $\psi_U$ is the cumulant of the bivariate L\'evy process $U=(U_1,U_2)$ and $h\in\mathsf C$. 
\end{lemma}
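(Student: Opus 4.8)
The plan is to compute the cumulant functional of the $\mathsf{C}$-valued variable $\mathcal{C}_1(X_1(t)-X_2(t))$ at finite $t$ and then pass to the limit, invoking Proposition~\ref{prop:equiv-charact-inf} to obtain the limiting distribution. First I would move the operator $\mathcal{C}_1$ onto the test element via its adjoint: for $h\in\mathsf{C}$,
$$
(h,\mathcal{C}_1(X_1(t)-X_2(t)))_{\mathsf{C}}=(\mathcal{C}_1^*h,X_1(t)-X_2(t))_{\mathsf{H}},
$$
so the computation reduces to pairing the $\mathsf{H}$-valued variable $X_1(t)-X_2(t)$ with the fixed vector $\mathcal{C}_1^*h\in\mathsf{H}$. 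Substituting the representation \eqref{eq:eberlein-model}, the pairing splits into a deterministic term $(\mathcal{C}_1^*h,h_1(t)-h_2(t))_{\mathsf{H}}$ and a stochastic term built from the two stochastic integrals $\int_0^t g_k(t-s+\cdot)\,dU_k(s)$, $k=1,2$.

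For the stochastic part I would exploit that $U=(U_1,U_2)$ is a bivariate L\'evy process, so the two integrals must be treated jointly. Writing the deterministic $\R^2$-valued integrand
$$
\phi_t(s):=\big((\mathcal{C}_1^*h,g_1(t-s+\cdot))_{\mathsf{H}},\,-(\mathcal{C}_1^*h,g_2(t-s+\cdot))_{\mathsf{H}}\big),
$$
the independent-increment argument of Lemma~\ref{lemma:lss}, now applied to the bivariate driver, yields that the cumulant of $\int_0^t\langle\phi_t(s),dU(s)\rangle$ equals $\int_0^t\psi_U(\phi_t(s))\,ds$. A change of variables $u=t-s$ then gives the finite-$t$ cumulant
$$
\mathrm{i}(\mathcal{C}_1^*h,h_1(t)-h_2(t))_{\mathsf{H}}+\int_0^t\psi_U\big((\mathcal{C}_1^*h,g_1(u+\cdot))_{\mathsf{H}},-(\mathcal{C}_1^*h,g_2(u+\cdot))_{\mathsf{H}}\big)\,du.
$$

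Letting $t\to\infty$, the deterministic term converges to $\mathrm{i}(\mathcal{C}_1^*h,h_\infty)_{\mathsf{H}}$ by continuity of the inner product and the hypothesis on $h_1-h_2$. For the integral I would reuse the quadratic estimate on the L\'evy cumulant established inside the proof of Lemma~\ref{lemma:lss}: since $U$ has zero mean and finite variance, $|\psi_U(z)|\leq K|z|^2$, whence by Cauchy--Schwarz the integrand is bounded by $K|\mathcal{C}_1^*h|_{\mathsf{H}}^2\big(|g_1(u+\cdot)|_{\mathsf{H}}^2+|g_2(u+\cdot)|_{\mathsf{H}}^2\big)$, which lies in $L^1(\R_+)$ by the assumption $|g_k(s+\cdot)|_{\mathsf{H}}\in L^2(\R_+)$. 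Dominated convergence then produces the stated limiting cumulant. The main obstacle is this final step: pointwise convergence of cumulant functionals does not by itself guarantee weak convergence of Hilbert-space laws, so I must verify that the limit is a genuine cumulant functional. Here the same $L^2$-bound shows the limit is continuous at the origin and has L\'evy--Khintchine form with a valid triplet (trace-class covariance and integrable L\'evy measure) obtained as the $t\to\infty$ limits of the triplets of the infinitely divisible variables $\mathcal{C}_1(X_1(t)-X_2(t))$; this legitimises applying Proposition~\ref{prop:equiv-charact-inf} to conclude the existence of the limiting distribution $P_\infty$.
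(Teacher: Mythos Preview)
Your proposal is correct and follows essentially the same route as the paper: compute the finite-$t$ cumulant via the adjoint and the independent-increment calculation of Lemma~\ref{lemma:lss} (lifted to the bivariate driver $U=(U_1,U_2)$), then let $t\to\infty$ using the quadratic bound $|\psi_U(z)|\le K|z|^2$ together with Cauchy--Schwarz and the hypothesis $|g_k(s+\cdot)|_{\mathsf H}\in L^2(\R_+)$ to justify integrability on $\R_+$. The paper's own proof is a terse two-sentence appeal to exactly these ingredients; your version is more explicit and, in particular, is more careful than the paper about the last step (that pointwise convergence of characteristic functionals on a Hilbert space must be upgraded to convergence in law), which the paper dispatches with ``the result follows''.
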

\begin{proof}
We find, following Lemma~\ref{lemma:lss}, that the processes $\int_0^tg_k(t-s+\cdot)\,dU_k(s)$ in $\mathsf H$ both admit a limiting
distribution. Moreover, by using the same argument for marginal integrability as in the proof of Lemma~\ref{lemma:lss}, we
find that $s\mapsto \psi_U((\mathcal C_1^*h,g_1(s+\cdot))_{\mathsf H},-(\mathcal C_1^*h,g_2(s+\cdot))_{\mathsf H})$ 
is integrable on $\R_+$ for any $h\in\mathsf C$. The result follows.  
\end{proof}
A special case is to choose $\mathsf C=\mathsf H$ and $\mathcal C_1=\text{Id}$. Thus, $X_1(t)-X_2(t)$ admits in particular
a limiting distribution when the conditons in Lemma~\ref{lemma:limit-lss} are fullfilled.   
 
Geman and Liu~\cite{GemanLiu} perform an empirical analysis of cointegration between the gas forward markets at Henry Hub (US) and 
National Balancing Point (UK). They introduce various measures on the forward curves to study how integrated the markets are. 
More specifically, it is proposed to measure the distance between the average of the respective forward curves, or simply the distance between the implied spot prices (closest maturity forwards), or the distance between some geometric weighted average of forward prices. In our 
context, the latter two distance measures can be expressed as $|\mathcal C_1 X_1(t)-\mathcal C_1 X_2(t)|$ with $\mathsf C=\R$ and
$\mathcal C_1\in\mathsf H^*$. For example, in the case of closest forwards (or spot), we choose $\mathcal C_1=\delta_0$, the evaluation 
operator at zero, assuming that this is continuous on $\mathsf H$. A weighted geometric average of the curve, on the other hand, can be translated into a weighted sum of log-prices over different maturities, which gives rise to a linear operator $\mathcal C_1$ being a 
weighted sum of evaluation maps $\delta_x$ for different $x$. The average of the forward curve is not possible to represent via 
a linear operator $\mathcal C_1$ in a geometric model. However, if we choose to work with an arithmetic model, this would simply 
become an
integral operator on the curves in $\mathsf H$.  

In view of the results in Section~\ref{sect:forward}, one can find spot models that leads to cointegration of forward prices with given time
to maturity. In the context of Geman and Liu~\cite{GemanLiu}, measuring the difference of the average of the 
forward curves at given maturity-times could lead to stationarity and thus the conclusion that the markets are cointegrated. 
However, Geman and Liu~\cite{GemanLiu} do not find evidence for cointegration of the two gas forward markets in
Henry Hub and National Balancing Point. This could be explained by a possible term structure of the risk premium (which can be traced back in the $\beta$ function above) and thus the need for more sophisticated choices of operators $\mathcal C$ to reveal a potential
cointegration.

\appendix

\section{The Filipovic space}
\label{app:filipovic}

We present the Filipovic space following Filipovic~\cite{Filip}:
Let $w:\R_+\rightarrow\R_+$ be a monotonely increasing function with $w(0)=1$ and $\int_0^{\infty}w^{-1}(x)\,dx<\infty$. 
Introduce the Filipovic space,
denoted $\mathsf H_w$, as the space of absolutely continuous functions $f:\R_+\rightarrow\R$ for which
$$
|f|_w^2:=f^2(0)+\int_0^{\infty}w(x)(f'(x))^2\,dx<\infty,
$$
where $f'$ is the weak derivative of $f$. With the inner product 
$$
(f,g)_w=f(0)g(0)+\int_0^{\infty}w(x)f'(x)g'(x)\,dx
$$
for $f,g\in \mathsf H_w$, $\mathsf H_w$ becomes a separable Hilbert space. The shift operator $\mathcal S(t):f\mapsto f(t+\cdot)$
for $t\geq 0$ defines a $C_0$-semigroup on $\mathsf H_w$ which is quasi-contractive and uniformly bounded. The generator of $\mathcal S(t)$ is the derivative operator. The evaluation
map $\delta_x:f\mapsto f(x)$ is a linear functional on $\mathsf H_w$. Finally, from Benth and Kr\"uhner~\cite{BK-CIMS}, 
$\mathsf H_w$ becomes a Banach algebra after appropriate rescaling of the norm $|\cdot|_w$, that is, if $f,g\in H_w$, then
$fg\in H_w$ and $\|fg\|_w\leq \|f\|_w\|g\|_w$ with $\|\cdot\|_w:=c|\cdot|_w$ for a suitable constant $c>0$ depending on 
$\int_0^{\infty}w^{-1}(x)\,dx$.

\end{document}